\numberwithin{equation}{section}
\newtheorem{theorem}{Theorem}[section]
\newtheorem{remark}{Remark}[section]
\newcommand{\sqr}[2]{{\vcenter{\vbox{\hrule height#2pt
                \hbox{\vrule width#2pt height#1pt \kern#1pt
                \vrule width#2pt}\hrule height#2pt}}}}
\newcommand{\mb}{\mbox}
\newcommand{\beq}{\begin{equation}}
\newcommand{\eeq}{\end{equation}}
\newcommand{\beqar}{\begin{eqnarray}}
\newcommand{\eeqar}{\end{eqnarray}}
\def\beqars{\begin{eqnarray*}}
\def\eeqars{\end{eqnarray*}}
\def\eop{\hfill\mb{$\hspace{12pt}\vrule height 7pt width 6pt depth 0pt$}}
\newcommand{\smod}[1]{\hspace{-1mm} \pmod{#1}}
\newcommand{\qu}[2]{\Bigl({\frac{#1}{#2}}\Bigr) }
\newcommand{\dqu}[2]{\ds{\qu{#1}{#2}}}
\def \ds{\displaystyle}
\newcommand{\nn}{\mathbb{N}}
\newcommand{\zz}{\mathbb{Z}}
\newcommand{\qq}{\mathbb{Q}}
\newcommand{\cc}{\mathbb{C}}
\newcommand{\hh}{\mathbb{H}}
\begin{document}

\title{Theta Products and Eta Quotients of Level $\mathbf{24}$ and Weight $\mathbf{2}$}

\author{Ay\c{s}e Alaca, \c{S}aban Alaca, Zafer Selcuk Aygin}

\maketitle

\markboth{AY\c{S}E ALACA, \c{S}ABAN ALACA, ZAFER SELCUK AYGIN}
{THETA PRODUCTS AND ETA QUOTIENTS OF LEVEL 24 AND WEIGHT 2} 

\begin{abstract}

We find bases for the spaces $M_2\Big(\Gamma_0(24),\dqu{d}{\cdot}\Big)$  ($d=1,8,12, 24$) of modular forms. 
We determine the Fourier coefficients of all $35$ theta products  $\varphi[a_1,a_2,a_3,a_4](z)$ in these spaces. 
We then deduce formulas for the number of representations of a positive integer $n$ by diagonal quaternary quadratic forms with coefficients $1$, $2$, $3$ or $6$   in a uniform manner, of which  $14$ are Ramanujan's universal quaternary quadratic forms. 
We also find all the eta quotients in the Eisenstein spaces $E_2\Big(\Gamma_0(24),\dqu{d}{\cdot}\Big)$  ($d=1,8,12, 24$) 
and give their Fourier coefficients.  \\ 

\noindent
Keywords and phrases: Dedekind eta function, eta quotients, theta products, Eisenstein series, modular forms, cusp forms, Fourier coefficients, Fourier series.\\

\noindent
2010 Mathematics Subject Classification:  11F11, 11F20,  11F27, 11E20, 11F30
\end{abstract}

\vspace{2mm}

%%%  Section 1
\noindent
\section{Introduction and Notation}

Let $\nn$, $\nn_0$, $\zz$, $\qq$ and $\cc$ denote the sets of positive integers, non-negative integers, integers, rational numbers and complex numbers, respectively. 
Let $N\in\nn$. Let $\Gamma_0(N)$ be the modular subgroup defined by
\beqars
\Gamma_0(N) = \left\{ \left(
\begin{array}{cc}
a & b \\
c & d
\end{array}
\right)  \big | \, a,b,c,d\in \zz ,~ ad-bc = 1,~c \equiv 0 \smod {N}
\right\} .
\eeqars 
We define a Dirichlet character $\chi_t$ for each $t\in\{-24, -8, -4, -3, 1, 8, 12, 24\}$ by
\beqar
 \chi_{t} ( m)=\dqu{t}{m} \text{ for }  m \in \zz .
\eeqar
Note that $\chi_1$ is the trivial character. 
Let $\chi$ and $\psi$ be Dirichlet characters. 
For $n\in \nn$ we define the generalized sums of divisors functions  $\ds \sigma_{(\chi,\psi )}(n)$ by
\beqar
\sigma_{(\chi,\psi )}(n) :=\sum_{1 \leq m|n}\chi(m)\psi(n/m)m.
\eeqar 
If $n \not\in \nn$ we set $\sigma_{(\chi,\psi )}(n)=0$. 
If $\chi=\psi=\chi_1$, then $\sigma_{(\chi,\psi)}(n) $ coincides with  the sum of divisors function 
\beqars
\sigma(n) = \sum_{1\leq m \mid n} m.
\eeqars

Let $k \in \zz$. We write $M_k(\Gamma_0(N),\chi)$ to denote the space of modular forms of weight $k$
with multiplier system $\chi$ for  $\Gamma_0(N)$, and 
$E_k (\Gamma_0(N),\chi)$ and $S_k(\Gamma_0(N),\chi)$ to denote the subspaces of Eisenstein forms and cusp forms 
of  $M_k(\Gamma_0(N),\chi)$, respectively. We also write $M_k(\Gamma_0(N))$, $E_k(\Gamma_0(N))$ and $S_k(\Gamma_0(N))$  for $M_k(\Gamma_0(N),\chi_1)$,  $E_k(\Gamma_0(N),\chi_1)$ and  $S_k(\Gamma_0(N),\chi_1)$, respectively.
It is known  (see \cite[p. 83]{stein} ) that
\beqar
M_k (\Gamma_0(N),\chi) = E_k (\Gamma_0(N),\chi) \oplus S_k(\Gamma_0(N),\chi).
\eeqar
The Dedekind eta function $\eta (z)$ is the holomorphic function defined on  the upper half plane $\hh=\{ z \in \cc \mid \mbox{\rm Im}(z) >0 \}$ by 
\beqar
\eta (z) = e^{\pi i z/12} \prod_{n=1}^{\infty} (1-e^{2\pi inz}).
\eeqar
Throughout the remainder of this paper we set $q=q(z):=e^{2\pi i z}$ with  $z\in \hh$.
Let $N \in \nn$ and $r_\delta \in \zz$ for each positive divisor $\delta$ of $N$. We define an eta quotient of  level $N$ by the product formula
\beqar
f(z) = \prod_{1\leq \delta \mid N} \eta^{r_{\delta}} ( \delta z) .
\eeqar 
We define an  Eisenstein series $E_{t_1,t_2} (q)$ by
\begin{align}
\ds E_{t_1,t_2} (z):=C_{t_1,t_2}+\sum_{n=1}^{\infty} \sigma_{(\chi_{t_1},\chi_{t_2})}(n)q^n
\end{align}
for each
\begin{align*}
(t_1,t_2)=&(-8,-3), (-3,-8), (1,24), (24,1), (1,1),(1,8), (8,1),\\
&(1,12), (12, 1), (-3,-4),(-4,-3),
\end{align*}
where
\begin{align*}
& C_{8,1}=-\frac{1}{2},~C_{12,1}=-1, ~C_{24,1}=-3,  ~C_{1,1}=-\frac{1}{24},\\
& C_{-8,-3}=C_{-3,-8}=C_ {1,24}=C_{1,8}=C_{1,12}=C_{-3,-4}=C_{-4,-3}=0.
\end{align*}
For $(t_1, t_2)=(1,1)$ we write
\beqars
L(q):=E_{1,1}(z)=-\frac{1}{24}+\sum_{n=1}^{\infty} \sigma(n) q^n.
\eeqars
For $1<d \mid 24 $ we set
\beqar
L_d(q):=L(q)-dL(q^d) . %\in E_2(\Gamma_0(24)).
\eeqar
Ramanujan's theta function $\varphi (z)$ is defined by
\beqars
\varphi(z) = \sum_{n=-\infty}^\infty q^{  n^2 }. 
\eeqars
It is well known  \cite[p. 11]{Berndt} that $\varphi(z)$ can be expressed as
\beqar
\varphi(z)=\frac{\eta^5(2z)}{\eta^2(z) \eta^2(4z)}.
\eeqar
Let $a_1,~a_2,~a_3,~a_4 \in \nn$ and $n\in \nn_0$. We define 
\beqars
N(a_1,a_2, a_3, a_4;n):=\mbox{card}\{(x_1,  x_2, x_3, x_4) \in \zz^{4} \vert  \, n=a_1 x_1^2 +a_{2} x_{2}^2+a_{3} x_{3}^2  +a_{4} x_{4}^2  \} .
\eeqars
For notational convenience we set
\beqars
\varphi[a_1,a_2,a_3,a_4](z):=\varphi(a_1 z) \varphi(a_2 z)\varphi(a_3 z)\varphi(a_4 z).
\eeqars
We have
\beqar
\varphi[a_1,a_2,a_3,a_4](z)=\sum_{n=0}^\infty N(a_1,a_2, a_3, a_4;n) q^{ n  },
\eeqar
which is independent of the order of $a_1,a_2,a_3,a_4$.
We have $35$  theta products $\varphi[a_1,a_2,a_3,a_4](z)$ of level $24$ and weight $2$.
We group them according to the modular spaces to which they belong, namely 
\begin{align}
\left\{ 
\begin{array}{l l l l}
\varphi[1,1,1,1](z), & \varphi[1,1,2,2](z), & \varphi[1,1,3,3](z), &\varphi[1,1,6,6](z), \\
 \varphi[1,2,3,6](z), & \varphi[2,2,2,2](z),&\varphi[2,2,3,3](z), & \varphi[2,2,6,6](z),\\
  \varphi[3,3,3,3](z),& \varphi[3,3,6,6](z), & \varphi[6,6,6,6](z); &
\end{array}
\right.
\end{align}
\begin{align}
\left\{ 
\begin{array}{l l l l}
\varphi[1,1,1,2](z), & \varphi[1,1,3,6](z), & \varphi[1,2,2,2](z), & \varphi[1,2,3,3](z), \\
\varphi[1,2,6,6](z), & \varphi[2,2,3,6](z),  &\varphi[3,3,3,6](z), & \varphi[3,6,6,6](z) ;
\end{array}
\right. 
\end{align}
\begin{align}
\left\{ 
\begin{array}{l l l l}
 \varphi[1,1,1,3](z), & \varphi[1,1,2,6](z), & \varphi[1,2,2,3](z),& \varphi[1,3,3,3](z),\\
 \varphi[1,3,6,6](z), & \varphi[2,2,2,6](z), &\varphi[2,3,3,6](z), & \varphi[2,6,6,6](z) ;
\end{array}
\right.
\end{align}
\begin{align}
\left\{ 
\begin{array}{l l l l}
\varphi[1,1,1,6](z), & \varphi[1,1,2,3](z), & \varphi[1,2,2,6](z), &\varphi[1,3,3,6](z),\\
\varphi[1,6,6,6](z), & \varphi[2,2,2,3](z), &\varphi[2,3,3,3](z), & \varphi[2,3,6,6](z)  
\end{array}
\right. 
\end{align}
are in $M_2(\Gamma_0(24),\chi_1)$, $ M_2(\Gamma_0(24),\chi_{8})$,  $ M_2(\Gamma_0(24),\chi_{12})$ and $M_2(\Gamma_0(24),\chi_{24})$, respectively.

In this paper we give bases for the spaces $M_2\Big(\Gamma_0(24),\dqu{d}{\cdot}\Big)$  ($d=1,8,12, 24$) of modular forms. 
We determine the Fourier cofficients of all $35$ theta products in (1.10)--(1.13). 
We then deduce explicit formulas for $N(a_1, a_2, a_3, a_4 ;n)$, where $a_1,a_2,a_3,a_4 \in \{ 1,2,3,6 \}$,   in a uniform manner, 
of which $14$ are Ramanujan's universal quaternary quadratic forms given in \cite{ramanujan}.  
We also find all the eta quotients in the Eisenstein spaces $E_2\Big(\Gamma_0(24),\dqu{d}{\cdot}\Big)$  ($d=1,8,12, 24$) 
and give their Fourier coefficients. 

%% Section 2
\section{Bases for $M_2(\Gamma_0(24),\chi_i)$ for $ i\in \{1, 8, 12, 24\}$}

We deduce from \cite[Sec. 6.1, p. 93]{stein} that
\begin{eqnarray}
\dim(S_2(\Gamma_0(24))=1, ~ \dim(E_2(\Gamma_0(40))=7.
\end{eqnarray}
We also deduce from \cite[Sec. 6.3, p. 98]{stein} that
\begin{eqnarray}
&&\dim(S_2(\Gamma_0(24),\chi_8)=2, 
\dim(S_2(\Gamma_0(24),\chi_{12})=0, 
\dim(S_2(\Gamma_0(24),\chi_{24})=2, 
\end{eqnarray}
and
\begin{eqnarray}
&&\dim(E_2(\Gamma_0(24),\chi_8)=4,
\dim(E_2(\Gamma_0(24),\chi_{12})=8,
\dim(E_2(\Gamma_0(24),\chi_{24})=4.
\end{eqnarray}
We define the eta quotients 
\beqar
&&A(q):= \eta(2z)\eta(4z)\eta(6z)\eta(12z),\\
&&B_1(q):=\frac{\eta(z)\eta^4(6z)\eta^2(8z)}{\eta(2z)\eta(3z)\eta(12z)},~\hspace{10mm}
B_2(q):=\frac{\eta^2(z)\eta(8z)\eta^4(12z)}{\eta(4z)\eta(6z)\eta(24z)},\\
&&C_1(q):=\frac{\eta(z)\eta(4z)\eta^4(6z)\eta^2(24z)}{\eta(2z)\eta(3z)\eta^2(12z)},~
C_2(q):=\frac{\eta^2(z)\eta^4(4z)\eta(6z)\eta(24z)}{\eta^2(2z)\eta(8z)\eta(12z)}.
\eeqar

We now give a basis for $M_2(\Gamma_0(24),\chi_i)$ for each $ i\in \{1, 8, 12, 24\}$.
\begin{theorem} Let $\chi_t$ be as in {\em(1.1)} for $t=1,8,12,24$. Then
\begin{align*}
&\begin{aligned}
\{ L_d(q) \mid d=2,3,4,6,8,12,24\}\cup \{A(q) \},
\end{aligned}\\
&\begin{aligned}
\left\{E_{1, 8} (z),~E_{1, 8} (3z),~E_{8,1} (z),~E_{ 8,1} (3z),~B_1(q),~B_2(q)\right\},
\end{aligned}\\
&\begin{aligned}
\left\{E_{1, 12} (z), E_{1, 12} (2z), E_{ 12,1} (z), E_{ 12,1} (2z), E_{-4,-3} (z), \right. \\
\left.  E_{-4,-3} (2z), E_{-3,-4} (z), E_{-3,-4} (2z)\right\}, 
\end{aligned}\\
&\begin{aligned}
\left\{E_{1, 24} (z),~E_{24,1} (z),~E_{ -3,-8} (z), ~E_{ -8,-3} (z),~C_1(q),~C_2(q)\right\},
\end{aligned}
\end{align*}
are bases for $M_2(\Gamma_0(24))$,  $M_2(\Gamma_0(24),\chi_8)$, $M_2(\Gamma_0(24),\chi_{12})$ and  $M_2(\Gamma_0(24),\chi_{24})$, respectively.
\end{theorem}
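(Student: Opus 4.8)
The plan is, for each of the four proposed sets, to check three things: that every listed function is a holomorphic modular form of weight $2$ and level $24$ with the stated character, that the cardinality of the set equals the dimension of the ambient space, and that the functions are linearly independent. The dimension count is immediate from the data recorded in $(2.1)$--$(2.3)$: the four sets have $8$, $6$, $8$ and $6$ elements, matching $\dim M_2(\Gamma_0(24),\chi_t)=\dim E_2+\dim S_2$ in each case. Hence the real content is membership together with linear independence.

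For the Eisenstein-type generators I would appeal to the standard theory. Each $E_{t_1,t_2}(z)$ is the weight-$2$ Eisenstein series attached to the pair of characters $\chi_{t_1},\chi_{t_2}$, so it lies in $E_2(\Gamma_0(M),\chi_{t_1}\chi_{t_2})$, where $M$ is the product of the conductors; since in every block at least one of the two characters is nontrivial, no quasimodular anomaly arises and the series is genuinely holomorphic. The rescaled form $E_{t_1,t_2}(\delta z)$ with $\delta M\mid 24$ then lies in $E_2(\Gamma_0(24),\chi_{t_1}\chi_{t_2})$, and one checks in each block that $\chi_{t_1}\chi_{t_2}$ is the required character, using identities such as $\chi_{-4}\chi_{-3}=\chi_{12}$ and $\chi_{-8}\chi_{-3}=\chi_{24}$. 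The remaining generators $L_d(q)=L(q)-dL(q^d)$ for $d\mid 24$ handle the principal-character case: although $L(q)$ is only quasimodular, the anomaly cancels in the difference, so $L_d\in E_2(\Gamma_0(d))\subseteq E_2(\Gamma_0(24))$.

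For the five eta quotients $A$, $B_1$, $B_2$, $C_1$, $C_2$ I would apply Ligozat's criteria. The weight is $\tfrac12\sum_{\delta}r_\delta=2$ in each case; the level-$24$ and character conditions follow from $\sum_\delta\delta r_\delta\equiv\sum_\delta(24/\delta)r_\delta\equiv 0\pmod{24}$; and the nebentypus is the Kronecker symbol $\big(\tfrac{s}{\cdot}\big)$ with $s=\prod_\delta\delta^{r_\delta}$, so that $s=576$ gives $\chi_1$ for $A$ while $s=1152$ gives $\chi_8$ for $B_1$, and similarly for the others. The one step that is not automatic, and the main obstacle, is holomorphy: the order of vanishing of $\prod_\delta\eta^{r_\delta}(\delta z)$ at a cusp $a/c$ of $\Gamma_0(24)$ (with $c\mid 24$) is a positive multiple of $\sum_{\delta\mid 24}\gcd(c,\delta)^2 r_\delta/\delta$, and since $B_1,B_2,C_1,C_2$ all carry negative exponents one must verify that this quantity is non-negative at every cusp of $\Gamma_0(24)$, checking cusp by cusp.

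Finally, for linear independence I would use the Sturm bound. Since $[\mathrm{SL}_2(\zz):\Gamma_0(24)]=48$, any form in $M_2(\Gamma_0(24),\chi_t)$ whose $q$-expansion vanishes through the coefficient of $q^{8}$ is identically zero. It therefore suffices to expand each listed function as a $q$-series, assemble the matrix of coefficients of $q^0,\dots,q^{8}$, and confirm that it has full rank over $\qq$; the $q$-expansions of the eta quotients come from the product defining $\eta$, while those of the Eisenstein series are read off from their definitions. Full rank, combined with the membership and dimension count already established, forces each set to be a basis of its space.
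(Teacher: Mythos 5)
Your proposal is correct, and the membership half of it (Ligozat's criteria for $A$, $B_1$, $B_2$, $C_1$, $C_2$, including the cusp-by-cusp non-negativity check, and the standard theory of weight-$2$ Eisenstein series attached to pairs of characters for the $E_{t_1,t_2}(\delta z)$ and $L_d$) is exactly what the paper does by citing Ono's Theorem 1.64, K\"ohler's Corollary 2.3, and Stein's Chapter 5. Where you diverge is in how the basis property is concluded. The paper invokes Stein's Theorems 5.8 and 5.9, which assert outright that the listed Eisenstein series form a \emph{basis} of the Eisenstein subspace $E_2(\Gamma_0(24),\chi_t)$; combined with the decomposition $M_2=E_2\oplus S_2$ from (1.3) and the dimension counts (2.1)--(2.3), only the independence of the one or two cusp-form eta quotients remains (a point the paper in fact leaves implicit). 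You instead establish linear independence of the entire proposed set at once by a Sturm-bound rank computation on the coefficients of $q^0,\dots,q^8$, and then conclude from the dimension count alone, without needing the direct-sum decomposition or the spanning statement for the Eisenstein part. Your route costs an explicit (but finite and easily automated) coefficient computation in each of the four spaces, and for the nontrivial characters it tacitly uses that the Sturm bound $km/12=8$ with $m=[\mathrm{SL}_2(\zz):\Gamma_0(24)]=48$ remains valid for $M_2(\Gamma_0(24),\chi_t)$ --- which it does, e.g.\ by applying Sturm to $f^2\in M_4(\Gamma_0(24))$ since each $\chi_t$ is quadratic, and which the paper itself relies on in Theorems 3.1--3.4. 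In exchange you get a uniform, self-contained verification that also covers the independence of $B_1,B_2$ and of $C_1,C_2$, which the paper's citation-based argument does not explicitly address.
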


\begin{proof} Appealing to \cite[Theorem 1.64, p. 18]{onoweb} and  \cite[Corollary 2.3, p. 37]{Kohler} (see also  \cite{Ligozat, Kilford, alacaaygin-1}) 
one can show that $A(q)\in S_2(\Gamma_0(24))$; $B_1(q), B_2(q) \in  S_2(\Gamma_0(24),\chi_8)$; $C_1(q), C_2(q) \in  S_2(\Gamma_0(24),\chi_{24})$.
The assertion (i) follows from  (1.3), (2.1) and \cite[Theorem 5.8, p. 88]{stein}. (ii)  follows from (1.3), (2.2), (2.3) and  
\cite[Theorem 5.9, p. 88]{stein} with $\epsilon=\chi_8$ and $\chi, \psi \in \{ \chi_1, \chi_8\}$.
(iii) follows from (1.3), (2.2) and (2.3) \cite[Theorem 5.9, p. 88]{stein} with $\epsilon=\chi_{12}$ and $\chi, \psi \in \{\chi_1, \chi_{12}, \chi_{-3}, \chi_{-4} \}$.
(iv) follows from  (1.3), (2.2), (2.3) and \cite[Theorem 5.9, p. 88]{stein} with $\epsilon=\chi_{24}$ and $\chi, \psi \in \{\chi_1, \chi_{24}, \chi_{-3}, \chi_{-8}\}$.
\end{proof}

%% Section 3
\section{Theta products in  $M_2(\Gamma_0(24),\chi_i)$ for  $ i\in \{1, 8, 12, 24\}$}

Theorems 3.1--3.4 below follow from  (1.10)--(1.13) and Theorem 2.1.

\begin{theorem} %Thm 3.1
Let  $\varphi[a_1,a_2,a_3,a_4](z)\in M_2(\Gamma_0(24))$ be any of the theta products given in {\em(1.10)}, and let $L_d(q)$ be as in {\em (1.7)}. Then we have 
\beqars
\varphi[a_1,a_2,a_3,a_4](z) &=&  b_2 L_2(q)+b_3L_3(q)+b_4L_4(q)+b_6L_6(q) +b_8(L_8(q)\\
&&+b_{12}L_{12}(q)+b_{24}L_{24}(q) + x A(q),
\eeqars
where  the coefficients $b_2$,  $b_{3}$, $b_4$, $b_6$, $b_8$, $b_{12}$, $b_{24}$ and $x$ are given at the right hand side of {\em Table 3.1}.
\end{theorem}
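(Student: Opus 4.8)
The plan is to lean entirely on Theorem 2.1: the set $\{L_d(q)\mid d=2,3,4,6,8,12,24\}\cup\{A(q)\}$ is a basis of $M_2(\Gamma_0(24))$, a space of dimension $8$. Since each of the eleven theta products listed in (1.10) belongs to $M_2(\Gamma_0(24))$, it is a unique linear combination of these eight basis functions; thus the scalars $b_2,b_3,b_4,b_6,b_8,b_{12},b_{24},x$ exist and are uniquely determined by the theta product. The entire content of the theorem is the numerical evaluation of these eight scalars for each quadruple $(a_1,a_2,a_3,a_4)$, and I would obtain them by comparing Fourier coefficients.

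First I would write down the $q$-expansions of the basis elements. From (1.6) and (1.7) one gets $L_d(q)=\tfrac{d-1}{24}+\sum_{n\ge1}\big(\sigma(n)-d\,\sigma(n/d)\big)q^n$ (with $\sigma(n/d)=0$ when $d\nmid n$), and from the eta-product formula for $A(q)$ together with the product definition of $\eta$ one expands $A(q)=q^2+\cdots$. Next, for each quadruple in (1.10) I would expand $\varphi[a_1,a_2,a_3,a_4](z)=\sum_{n\ge0}N(a_1,a_2,a_3,a_4;n)q^n$ directly from $\varphi(a_iz)=\sum_{m\in\zz}q^{a_im^2}$, that is, by counting the relevant representations.

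Equating the coefficients of $q^0,q^1,\dots,q^7$ on the two sides then yields a system of eight linear equations in the eight unknowns $b_2,\dots,b_{24},x$. Solving it for each of the eleven theta products produces the entries recorded on the right-hand side of Table 3.1. The solution is unique: if the matrix of leading Fourier coefficients of the eight basis functions were singular, some nontrivial combination of them would vanish to order $q^8$, and then the Sturm bound would force that combination to be identically zero, contradicting linear independence.

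That Sturm bound is also what makes the finite computation rigorous. Since $[\mathrm{SL}_2(\zz):\Gamma_0(24)]=48$, any element of $M_2(\Gamma_0(24))$ whose Fourier coefficients vanish through $q^{\lfloor 2\cdot48/12\rfloor}=q^8$ must be zero. Applying this to the difference of the two sides, it suffices to check that the proposed identity holds through $q^8$, one coefficient beyond what the linear solve already uses. The argument presents no genuine obstacle; it is purely computational, and the only points demanding care are expanding $A(q)$ and the eleven theta products accurately to the required order and keeping the eleven separate linear systems organized.
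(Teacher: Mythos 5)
Your proposal is correct and follows essentially the same route as the paper: invoke Theorem 2.1 to write each theta product as a linear combination of $L_d(q)$ ($d=2,3,4,6,8,12,24$) and $A(q)$, then determine the coefficients by equating Fourier coefficients up to the Sturm bound $8$ for $M_2(\Gamma_0(24))$. The only cosmetic difference is that you solve an $8\times 8$ system from the coefficients of $q^0,\dots,q^7$ and then verify at $q^8$, while the paper equates coefficients for $0\le n\le 8$ all at once; both are justified by the same Sturm-bound argument.
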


{\small
\begin{center}  
\begin{longtable}{r r r r| r r r r r r r r}
\caption{\footnotesize $\varphi[a_1,a_2,a_3,a_4](z)= b_2 L_2(q)+b_3L_3(q)+b_4L_4(q)+b_6L_6(q) +b_8(L_8(q)+b_{12}L_{12}(q)+b_{24}L_{24}(q) + x A(q)$. } \\
\hline\hline 
$a_1$ & $a_2$ & $a_3$ & $a_4$ & $b_2$ & $b_{3}$ & $b_4$ & $b_6$ & $b_8$ & $b_{12}$ & $b_{24}$ & $x$\\ 
\hline
\endfirsthead
%{\textit{Continued from previous page}} \\
\hline
\hline 
 $a_1$ & $a_2$ & $a_3$ & $a_4$ & $b_1$ & $b_{2}$ & $b_3$ & $b_4$ & $b_5$ & $b_6$ & $b_7$ & $b_8$\\ 
\hline
\endhead
\hline % {\textit{Continued on next page}} \\
\endfoot
\hline
\endlastfoot
$1$ & $ 1$ & $ 1$ & $1$ & $ 0$ & $0$ & $8$ & $0$ & $0$ & $0$ & $0$ & $0$\\
$1$ & $1$ & $ 2$ & $ 2$ & $ 2$ & $0$ & $-2$ & $0$ & $4$ & $0$ & $0$ & $0$\\
$1$ & $ 1$ & $ 3$ & $ 3$ & $ 4$ & $4$ & $-4$ & $-4$ & $0$ & $4$ & $0$ & $0$\\
$1$ & $ 1$ & $ 6$ & $ 6$ & $ 1$ & $2$ & $1$ & $-1$ & $-2$ & $-1$ & $2$ & $2$\\
$1$ & $ 2$ & $ 3$ & $ 6$ & $ 1/2$ & $-1$ & $-1/2$ & $1/2$ & $1$ & $-1/2$ & $1$ & $1$\\
$2$ & $ 2$ & $ 2$ & $2$ & $ -4$ & $0$ & $0$ & $0$ & $4$ & $0$ & $0$ & $0$\\
$2$ & $ 2$ & $ 3$ & $ 3$ & $ 1$ & $2$ & $1$ & $-1$ & $-2$ & $-1$ & $2$ & $-2$\\
$2$ & $ 2$ & $ 6$ & $6$ & $ -2$ & $0$ & $2$ & $2$ & $-2$ & $-2$ & $2$ & $0$\\
$3$ & $ 3$ & $ 3$ & $ 3$ & $ 0$ & $-8/3$ & $0$ & $0$ & $0$ & $8/3$ & $0$ & $0$\\
$3$ & $ 3$ & $ 6$ & $ 6$ & $ 0$ & $-4/3$ & $0$ & $2/3$ & $0$ & $-2/3$ & $4/3$ & $0$\\
$6$ & $ 6$ & $ 6$ & $ 6$ & $ 0$ & $0$ & $0$ & $-4/3$ & $0$ & $0$ & $4/3$ & $0$
\end{longtable}
\end{center}
}

{\bf Proof.} Let  $\varphi[a_1,a_2,a_3,a_4](z)$ be any of the theta products listed in  (1.10).
By Theorem 2.1,  $\varphi[a_1, a_2, a_3,a_4](z)$ must be a linear combination of $L_d(q)$ ($d=2,3,4,6,8,12,24$) 
and $A(q)$, namely 
\begin{align}
\varphi[a_1, a_2, a_3,a_4](z) =&\,   b_2 L_2(q)+b_3L_3(q)+b_4L_4(q)+b_6L_6(q) +b_8(L_8(q)\\
&+b_{12}L_{12}(q)+b_{24}L_{24}(q) + x A(q) \nonumber 
\end{align}
for some scalars $b_2,b_3,b_4,b_6,b_8,b_{12},b_{24}, x\in \cc$.
By \cite[Theorem 3.13]{Kilford}, the Sturm bound for the modular space $M_2(\Gamma_0(24))$ is $8$.
So, equating the coefficients of $q^{n}$ for $0\leq n\leq 8$ on both sides of (3.1),
we find a  system of linear equations.
We solve this system and find the asserted coefficients.
\eop

\vspace{2mm}

Similarly to Theorem 3.1, Theorems 3.2--3.4 follow from (1.11)--(1.13) and Theorem 2.1.

\begin{theorem} %Thm 3.2
Let  $\varphi[a_1,a_2,a_3,a_4](z)\in M_2(\Gamma_0(24),\chi_{8})$ be any of the theta products given in {\em(1.11)}, 
where $\chi_{8}(n)$ is given by {\em (1.1)}. Let $E_{1,8}(z)$ and $E_{8,1}(z)$ be as in {\em (1.6)}.
Then we have 
\beqars
\varphi[a_1,a_2,a_3,a_4](z) &=&  b_1 E_{1, 8} (z) + b_2 E_{1, 8} (3z) + b_3 E_{8,1} (z) +b_4 E_{ 8,1} (3z)\\
&&+ x_1 B_1(q)+x_2 B_2(q),
\eeqars
where the  coefficients $b_1$,  $b_{2}$, $b_3$, $b_4$,  $x_1$ and $x_2$ are given at the right hand side of {\em Table 3.2}.
\end{theorem}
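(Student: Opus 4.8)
The plan is to follow exactly the same template that proves Theorem 3.1, now applied to the space $M_2(\Gamma_0(24),\chi_8)$. By Theorem 2.1, the set $\{E_{1,8}(z), E_{1,8}(3z), E_{8,1}(z), E_{8,1}(3z), B_1(q), B_2(q)\}$ is a basis for this space, and by (1.11) each of the eight theta products $\varphi[a_1,a_2,a_3,a_4](z)$ listed there lies in $M_2(\Gamma_0(24),\chi_8)$. Hence each such theta product admits a unique expansion
\beqars
\varphi[a_1,a_2,a_3,a_4](z) = b_1 E_{1,8}(z)+b_2 E_{1,8}(3z)+b_3 E_{8,1}(z)+b_4 E_{8,1}(3z)+x_1 B_1(q)+x_2 B_2(q)
\eeqars
with constants $b_1,b_2,b_3,b_4,x_1,x_2\in\cc$, and the existence and uniqueness of these constants is guaranteed purely by the basis property.

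To pin down the constants I would invoke the Sturm bound, as in Theorem 3.1. Appealing to \cite[Theorem 3.13]{Kilford}, the Sturm bound for $M_2(\Gamma_0(24),\chi_8)$ is again $8$ (the bound $\frac{k}{12}[\mathrm{SL}_2(\zz):\Gamma_0(24)]$ depends only on the weight and the index, not on the character), so two modular forms in this space that agree on the Fourier coefficients of $q^n$ for $0\le n\le 8$ must be identical. I would therefore compute the $q$-expansions of the four Eisenstein series and the two cusp forms $B_1(q),B_2(q)$ up to $q^8$ using the definitions (1.4)--(1.6) of the eta quotients and the divisor sums $\sigma_{(\chi_{t_1},\chi_{t_2})}(n)$, and likewise compute the $q$-expansion of each $\varphi[a_1,a_2,a_3,a_4](z)$ from (1.9) and the product formula (1.8) for $\varphi$. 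Equating coefficients of $q^0,\dots,q^8$ yields, for each theta product, a linear system in the six unknowns $b_1,b_2,b_3,b_4,x_1,x_2$.

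Solving each overdetermined but consistent system gives the entries recorded in Table 3.2, completing the proof. The only genuine obstacle is bookkeeping rather than ideas: one must expand the eta quotients $B_1(q),B_2(q)$, whose $\eta$-factors carry negative exponents, carefully enough that the combined $q$-series is correct through $q^8$, and one must be consistent about the rescaling $z\mapsto 3z$ in the Eisenstein series (which sends $q\mapsto q^3$). Since the systems are linear with a known unique solution, consistency of the overdetermined system serves as a built-in check that both the basis claim of Theorem 2.1 and the membership claim of (1.11) are being used correctly. The proofs of Theorems 3.3 and 3.4 proceed identically, replacing the basis by the one for $M_2(\Gamma_0(24),\chi_{12})$ or $M_2(\Gamma_0(24),\chi_{24})$ given in Theorem 2.1, and using the same Sturm bound $8$.
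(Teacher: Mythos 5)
Your proposal matches the paper's argument exactly: the paper proves Theorem 3.1 by expanding in the basis from Theorem 2.1 and equating Fourier coefficients up to the Sturm bound $8$, and then simply states that Theorems 3.2--3.4 "follow similarly" from (1.11)--(1.13) and Theorem 2.1. Your observation that the Sturm bound is unchanged in the presence of the character $\chi_8$ correctly fills in the only detail the paper leaves implicit.
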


{\small 
\begin{center} 
\begin{longtable}{r r r r| r r r r r r }
\caption{\footnotesize  $\varphi[a_1,a_2,a_3,a_4](z)= b_1 E_{1, 8} (z) + b_2 E_{1, 8} (3z) + b_3 E_{8,1} (z) +b_4 E_{ 8,1} (3z)+ x_1 B_1(q)+x_2 B_2(q) $} \\
\hline\hline 
$a_1$ & $a_2$ & $a_3$ & $a_4$ & $b_1$ & $b_{2}$ & $b_3$ & $b_4$ & $x_1$ & $x_2$ \\ 
\hline
\endfirsthead
%{\textit{Continued from previous page}} \\
\hline
\hline 
 $a_1$ & $a_2$ & $a_3$ & $a_4$ & $b_1$ & $b_{2}$ & $b_3$ & $b_4$ & $x_1$ & $x_2$ \\ 
\hline
\endhead
\hline % {\textit{Continued on next page}} \\
\endfoot
\hline
\endlastfoot
$1$ & $ 1$ & $ 1$ & $ 2$ & $ 8$ & $ 0$ & $ -2$ & $ 0$ & $ 0$ & $ 0$\\
$1$ & $ 1$ & $ 3$ & $ 6$ & $ 16/5$ & $ -24/5$ & $ -4/5$ & $ -6/5$ & $ 8/5$ & $ 0$ \\
$1$ & $ 2$ & $ 2$ & $ 2$ & $ 4$ & $ 0$ & $ -2$ & $ 0$ & $ 0$ & $ 0$ \\
$1$ & $ 2$ & $ 3$ & $ 3$ & $ 8/5$ & $ 48/5$ & $ 2/5$ & $ -12/5$ & $ -8/5$ & $ 8/5$ \\
$1$ & $ 2$ & $ 6$ & $ 6$ & $ 4/5$ & $ 24/5$ & $ 2/5$ & $ -12/5$ & $ 8/5$ & $ -4/5$ \\
$2$ & $ 2$ & $ 3$ & $ 6$ & $ 8/5$ & $ -12/5$ & $ -4/5$ & $ -6/5$ & $ 0$ & $ -4/5$ \\
$3$ & $ 3$ & $ 3$ & $ 6$ & $ 0$ & $ 8$ & $ 0$ & $ -2$ & $ 0$ & $ 0$ \\
$3$ & $ 6$ & $ 6$ & $ 6$ & $ 0$ & $ 4$ & $ 0$ & $ -2$ & $ 0$ & $ 0$ 
\end{longtable}
\end{center}
}

\begin{theorem} %Thm 3.3
Let  $\varphi[a_1,a_2,a_3,a_4](z)\in M_2(\Gamma_0(24),\chi_{12})$ be any of the theta products given in {\em(1.12)}, where $\chi_{12}(n)$ is given by {\em (1.1)}. 
Let $E_{1,12}(z)$, $E_{12,1}(z)$, $E_{-3,-4}(z)$ and $E_{-4,-3}(z)$ be as in {\em (1.6)}.
Then we have 
\begin{align*}
\varphi[a_1,a_2,a_3,a_4](z) =& b_1 E_{12, 1} (z) +b_2 E_{12,1} (2z) +b_3 E_{ 1,12} (z) +b_4 E_{ 1,12} (2z) \\
&+ b_5 E_{-4, -3} (z) +b_6 E_{-4,-3} (2z) +b_7 E_{ -3,-4} (z) +b_8 E_{ -3,-4} (2z),
\end{align*}
where the  coefficients $b_1$,  $b_{2}$, $b_3$, $b_4$,  $b_5$, $b_6$, $b_7$ and $b_8$ are given at the right hand side of  {\em Table 3.3}.
\end{theorem}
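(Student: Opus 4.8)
The plan is to proceed exactly as in the proofs of Theorems 3.1 and 3.2, since Theorem 3.3 is the direct analogue for the space $M_2(\Gamma_0(24),\chi_{12})$. By Theorem 2.1, the set $\{E_{1,12}(z),E_{1,12}(2z),E_{12,1}(z),E_{12,1}(2z),E_{-4,-3}(z),E_{-4,-3}(2z),E_{-3,-4}(z),E_{-3,-4}(2z)\}$ is a basis for $M_2(\Gamma_0(24),\chi_{12})$, which is $8$-dimensional (being a sum of the $8$-dimensional Eisenstein subspace and the $0$-dimensional cusp subspace, by (2.2) and (2.3)). Since each theta product $\varphi[a_1,a_2,a_3,a_4](z)$ listed in (1.12) lies in this space, it must be expressible as a unique linear combination of these eight basis elements. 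Thus for each of the eight quadruples I would write
\begin{align*}
\varphi[a_1,a_2,a_3,a_4](z) =&\, b_1 E_{12,1}(z)+b_2 E_{12,1}(2z)+b_3 E_{1,12}(z)+b_4 E_{1,12}(2z)\\
&+b_5 E_{-4,-3}(z)+b_6 E_{-4,-3}(2z)+b_7 E_{-3,-4}(z)+b_8 E_{-3,-4}(2z)
\end{align*}
for unknown scalars $b_1,\dots,b_8\in\cc$.

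To pin down these scalars, I would invoke the Sturm bound. As cited from \cite[Theorem 3.13]{Kilford} in the proof of Theorem 3.1, the Sturm bound for the relevant level-$24$, weight-$2$ space is $8$, so two modular forms in $M_2(\Gamma_0(24),\chi_{12})$ agree as soon as their Fourier coefficients agree for $0\le n\le 8$. I would therefore compute the $q$-expansions of both sides up to $q^8$: the left-hand coefficients are the representation numbers $N(a_1,a_2,a_3,a_4;n)$, obtained directly from the theta series, while the right-hand coefficients are explicit in the $b_i$ via the definition (1.6) of the Eisenstein series $E_{t_1,t_2}$ together with the rescalings $z\mapsto 2z$. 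Equating coefficients for $n=0,1,\dots,8$ yields an overdetermined but consistent linear system in $b_1,\dots,b_8$. Solving this system produces the entries recorded in Table 3.3.

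The one genuinely substantive point, rather than routine bookkeeping, is ensuring that the Sturm bound argument is applied correctly for the nontrivial character $\chi_{12}$ and for the arguments $z$ and $2z$: one must check that all eight basis forms, as well as the theta products, are genuinely holomorphic modular forms on $\Gamma_0(24)$ with the stated character, which is exactly what Theorem 2.1 and the membership statements (1.12) guarantee. The main obstacle is thus not conceptual but computational transparency—correctly expanding $E_{t_1,t_2}(2z)=C_{t_1,t_2}+\sum_{n\ge1}\sigma_{(\chi_{t_1},\chi_{t_2})}(n)q^{2n}$ and keeping track of the generalized divisor sums $\sigma_{(\chi_{t_1},\chi_{t_2})}(n)$ for the four relevant character pairs across all residues modulo $24$ up to $n=8$. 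Once the linear system is assembled, its solution is unique because the eight basis elements are linearly independent, so the coefficients in Table 3.3 are forced, completing the proof.
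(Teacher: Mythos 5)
Your proposal matches the paper's argument: the authors prove Theorem 3.3 exactly as they prove Theorem 3.1, namely by citing Theorem 2.1 to write each theta product in (1.12) as a linear combination of the eight Eisenstein basis elements and then determining the coefficients by equating Fourier coefficients up to the Sturm bound. The additional care you take in noting that the bound and the basis membership must be verified for the character $\chi_{12}$ is sound and consistent with what the paper implicitly relies on.
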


{\small 
\begin{center} 
\begin{longtable}{r r r r| r r r r r r r r }
\caption{\footnotesize $\varphi[a_1,a_2,a_3,a_4](z)=b_1 E_{12, 1} (z) +b_2 E_{12,1} (2z) +b_3 E_{ 1,12} (z) +b_4 E_{ 1,12} (2z)+ b_5 E_{-4, -3} (z) +b_6 E_{-4,-3} (2z) +b_7 E_{ -3,-4} (z) +b_8 E_{ -3,-4} (2z)$. } \\
\hline\hline 
$a_1$ & $a_2$ & $a_3$ & $a_4$ & $b_1$ & $b_{2}$ & $b_3$ & $b_4$ & $b_5$ & $b_6$ & $b_7$ & $b_8$ \\ 
\hline
\endfirsthead
%{\textit{Continued from previous page}} \\
\hline
\hline 
 $a_1$ & $a_2$ & $a_3$ & $a_4$ & $b_1$ & $b_{2}$ & $b_3$ & $b_4$ & $b_5$ & $b_6$ & $b_7$ & $b_8$ \\ 
\hline
\endhead
\hline % {\textit{Continued on next page}} \\
\endfoot
\hline
\endlastfoot
$1$ & $ 1$ & $ 1$ & $ 3$ & $   -1$ & $  0$ & $  6$ & $  0$ & $  3$ & $  0$ & $  -2$ & $  0$ \\
$1$ & $ 1$ & $ 2$ & $ 6$ & $   0$ & $  -1$ & $  3$ & $  0$ & $  0$ & $  3$ & $  1$ & $  0$ \\ 
$1$ & $ 2$ & $ 2$ & $ 3$ & $   0$ & $  -1$ & $  3$ & $  0$ & $  0$ & $  -3$ & $  -1$ & $  0$ \\
$1$ & $ 3$ & $ 3$ & $ 3$ & $   -1$ & $  0$ & $  2$ & $  0$ & $  -1$ & $  0$ & $  2$ & $  0$ \\ 
$1$ & $ 3$ & $ 6$ & $ 6$ & $   0$ & $  -1$ & $  1$ & $  0$ & $  0$ & $  1$ & $  1$ & $  0$ \\ 
$2$ & $ 2$ & $ 2$ & $ 6$ & $   0$ & $  -1$ & $  0$ & $  6$ & $  0$ & $  3$ & $  0$ & $  -2$ \\ 
$2$ & $ 3$ & $ 3$ & $ 6$ & $   0$ & $  -1$ & $  1$ & $  0$ & $  0$ & $  -1$ & $  -1$ & $  0$ \\ 
$2$ & $ 6$ & $ 6$ & $ 6$ & $   0$ & $  -1$ & $  0$ & $  2$ & $  0$ & $  -1$ & $  0$ & $  2$ 
\end{longtable}
\end{center}
}

\begin{theorem} %Thm 3.4
Let  $\varphi[a_1,a_2,a_3,a_4](z)\in M_2(\Gamma_0(24),\chi_{24})$ be any of the theta products given in {\em(1.13)}, where $\chi_{24}(n)$ is given by {\em (1.1)}.
Let $E_{1,24}(z)$, $E_{24,1}(z)$, $E_{-8,-3}(z)$ and $E_{-3,-8}(z)$ be as in {\em (1.6)}.
Then we have 
\begin{align*}
\varphi[a_1,a_2,a_3,a_4](z) =& b_1 E_{24, 1} (z)+b_2 E_{1,24} (z)+b_3 E_{ -8,-3} (z)+ b_4 E_{ -3,-8} (z) \\
&+x_1C_1(q) +x_2 C_2(q),
\end{align*}
where the coefficients $b_1$,  $b_{2}$, $b_3$, $b_4$, $x_1$ and $x_2$ are given at the right hand side of {\em Table 3.4}.
\end{theorem}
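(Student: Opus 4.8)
The plan is to follow exactly the template already established in the proofs of Theorems~3.1--3.3, since Theorem~3.4 is the $\chi_{24}$ analogue of those results. By Theorem~2.1, the set $\{E_{1,24}(z),\,E_{24,1}(z),\,E_{-3,-8}(z),\,E_{-8,-3}(z),\,C_1(q),\,C_2(q)\}$ is a basis for $M_2(\Gamma_0(24),\chi_{24})$. Each theta product $\varphi[a_1,a_2,a_3,a_4](z)$ listed in (1.13) lies in this space (as asserted in the grouping following (1.13)), so it admits a unique expansion
\begin{align*}
\varphi[a_1,a_2,a_3,a_4](z) =\,& b_1 E_{24,1}(z)+b_2 E_{1,24}(z)+b_3 E_{-8,-3}(z)+b_4 E_{-3,-8}(z)\\
&+x_1 C_1(q)+x_2 C_2(q)
\end{align*}
for scalars $b_1,b_2,b_3,b_4,x_1,x_2\in\cc$. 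The goal is simply to determine these six scalars for each of the eight theta products in (1.13), which is what Table~3.4 records.

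The key computational input is the Sturm bound for $M_2(\Gamma_0(24),\chi_{24})$. As in the proof of Theorem~3.1, I would invoke \cite[Theorem 3.13]{Kilford} to obtain this bound; for weight $2$ on $\Gamma_0(24)$ it is $8$, so two forms in this space that agree on the Fourier coefficients of $q^n$ for $0\le n\le 8$ must be identical. The next step is therefore to compute, for each of the six basis elements, the $q$-expansion through $q^8$: the four Eisenstein series $E_{24,1}(z)$, $E_{1,24}(z)$, $E_{-8,-3}(z)$, $E_{-3,-8}(z)$ are expanded via the defining series (1.6) using the generalized divisor sums $\sigma_{(\chi_{t_1},\chi_{t_2})}(n)$ and the constants $C_{t_1,t_2}$ specified after (1.6), while $C_1(q)$ and $C_2(q)$ are expanded from their eta-quotient product formulas. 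On the other side, the $q$-expansion of each $\varphi[a_1,a_2,a_3,a_4](z)$ through $q^8$ is computed directly from $\varphi(z)=\sum_{n=-\infty}^\infty q^{n^2}$, i.e.\ from the representation numbers $N(a_1,a_2,a_3,a_4;n)$ for $0\le n\le 8$.

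With these expansions in hand, equating the coefficients of $q^0,q^1,\dots,q^8$ on both sides of the expansion yields, for each theta product, an overdetermined but consistent linear system in the six unknowns $b_1,b_2,b_3,b_4,x_1,x_2$. Because the six basis forms are linearly independent (Theorem~2.1) and the Sturm bound guarantees that matching through $q^8$ forces equality of forms, this system has a unique solution, which is precisely the corresponding row of Table~3.4. Solving the eight systems produces the table. There is no genuine conceptual obstacle here; the entire content is routine but lengthy linear algebra over $\qq$, and the main practical difficulty is bookkeeping---correctly assembling the Eisenstein and eta-quotient $q$-expansions (in particular getting the constant terms $C_{t_1,t_2}$ right and handling the fractional exponents hidden in the eta products so that $C_1(q),C_2(q)$ are genuine power series in integer powers of $q$) and then solving the systems without arithmetic error. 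This is best verified by a symbolic computation, and the proof can be stated in one line mirroring the closing sentences of the proof of Theorem~3.1.
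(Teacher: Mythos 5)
Your proposal matches the paper's argument: the paper proves Theorem 3.1 by writing the theta product in the Theorem 2.1 basis, invoking the Sturm bound of $8$ for level $24$ and weight $2$, and solving the resulting linear system, and then states that Theorems 3.2--3.4 follow in the same way from (1.11)--(1.13) and Theorem 2.1. Your adaptation to the $\chi_{24}$ case is exactly that intended argument, so the proposal is correct and takes essentially the same approach.
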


{\small 
\begin{center} 
\begin{longtable}{r r r r| r r r r r r }
\caption{\footnotesize $\varphi[a_1,a_2,a_3,a_4](z)=b_1 E_{24, 1} (z)+b_2 E_{1,24} (z)+b_3 E_{ -8,-3} (z)+ b_4 E_{ -3,-8} (z) +x_1C_1(q) +x_2 C_2(q)$.} \\
\hline\hline 
$a_1$ & $a_2$ & $a_3$ & $a_4$ & $b_1$ & $b_{2}$ & $b_3$ & $b_4$ & $x_1$ & $x_2$ \\ 
\hline
\endfirsthead
%{\textit{Continued from previous page}} \\
\hline
\hline 
 $a_1$ & $a_2$ & $a_3$ & $a_4$ & $b_1$ & $b_{2}$ & $b_3$ & $b_4$ & $x_1$ & $x_2$  \\ 
\hline
\endhead
\hline % {\textit{Continued on next page}} \\
\endfoot
\hline
\endlastfoot
$1$ & $ 1$ & $ 1$ & $ 6$ & $  -1/3$ & $  4$ & $  1$ & $  -4/3$ & $  8$ & $  8/3$\\
$1$ & $ 1$ & $ 2$ & $ 3$ & $  -1/3$ & $  4$ & $  -1$ & $  4/3$ & $  0$ & $  0$ \\
$1$ & $ 2$ & $ 2$ & $ 6$ & $  -1/3$ & $  2$ & $  1$ & $  -2/3$ & $  0$ & $  0$ \\
$1$ & $ 3$ & $ 3$ & $ 6$ & $  -1/3$ & $  4/3$ & $  -1/3$ & $  4/3$ & $  0$ & $  0$ \\
$1$ & $ 6$ & $ 6$ & $ 6$ & $  -1/3$ & $  2/3$ & $  -1/3$ & $  2/3$ & $  8/3$ & $  4/3$ \\
$2$ & $ 2$ & $ 2$ & $ 3$ & $  -1/3$ & $  2$ & $  -1$ & $  2/3$ & $  0$ & $  -4/3$ \\
$2$ & $ 3$ & $ 3$ & $ 3$ & $  -1/3$ & $  4/3$ & $  1/3$ & $  -4/3$ & $  -8/3$ & $  0$ \\
$2$ & $ 3$ & $ 6$ & $ 6$ & $  -1/3$ & $  2/3$ & $  1/3$ & $  -2/3$ & $  0$ & $  0$ 
\end{longtable}
\end{center}
}

\begin{remark}
We obtain the following identities from {\rm Tables 3.1, 3.2} and {\rm 3.4}.  
\begin{align*}
&-\varphi[1,1,2,2](z) + 4 \varphi[1,2,3,6](z) -3\varphi[3,3,6,6](z)  = 4A(q), \\
& -2\varphi[1,1,1,2](z) + 5 \varphi[1,1,3,6](z) + 9 \varphi[3,3,3,6](z) -12\varphi[3,6,6,6](z)=8B_1(q) ,\\
&2\varphi[1,2,2,2](z)-5\varphi[2,2,3,6](z) -6\varphi[3,3,3,6](z) + 9 \varphi[3,6,6,6](z)=4B_2(q), \\
&2\varphi[1,1,1,6](z)-3 \varphi[1,1,2,3](z) +4 \varphi[2,2,2, 3](z) -3\varphi[2,3,3,3](z)=24C_1(q),\\ 
&2\varphi[1,1,2,3](z)-2\varphi[1,2,2,6](z)-3\varphi[2,2,2,3](z) +3\varphi[2,3,6,6](z)=4C_2(q). 
\end{align*}
\end{remark}

%% Sec. 4
\section{Representations by quaternary quadratic forms with coefficients $1$, $2$, $3$ or $6$}

Let  $a_1, a_2, a_3, a_4\in\{1,2,3,6\}$. With the simplifying  assumptions 
\begin{align*}
\gcd(a_1,a_2,a_3,a_4)=1 \text{ and } a_1\leq a_2\leq a_3\leq a_4,
\end{align*}
there are $26$ diagonal quaternary quadratic forms $a_1 x_1^2+a_2 x_2^2+a_3 x_2^2+a_4 x_4^2$. 
Of these,  $14$ are Ramanujan's universal quaternary quadratic forms given in \cite{ramanujan}.
In Theorem 4.1 we give explicit formulas for $N(a_1,a_2,a_3,a_4;n)$ for these $14$ universal quaternary quadratic forms. 
In Theorem 4.2 we give formulas  for $N(a_1,a_2,a_3,a_4;n)$ for the remaining $12$ quaternary quadratic forms. Both Theorems 4.1 and 4.2 follow from Theorems 3.1--3.4.

\begin{theorem} %{\bf (Ramanujan's fourteen universal quaternary quadratic forms)}  %% Thm 4.1
Let $n \in \nn$.  Then
\begin{align*}
&\begin{aligned}
{\bf (i)~} N(1,1,1,1;n) = & 8\sigma(n) - 32 \sigma (n/4),
\end{aligned}\\
&\begin{aligned}
{\bf (ii)~}
 N(1,1,2,2;n) = & 4\sigma(n) - 4\sigma (n/2) +8 \sigma (n/4)  - 32\sigma(n/8),  
\end{aligned}\\
&\begin{aligned}
{\bf (iii)~}
 N(1,1,3,3;n) = &4 \sigma(n) -8 \sigma (n/2) -12 \sigma (n/3) +16 \sigma (n/4) \\
 &+ 24 \sigma(n/6)  -48\sigma(n/12), 
\end{aligned}\\
&\begin{aligned}
{\bf (iv)~} N(1,2,3,6;n) = &\sigma(n) - \sigma (n/2) + 3 \sigma (n/3) + 2 \sigma (n/4) -3\sigma(n/6) \\
 &- 8\sigma(n/8)  +6\sigma(n/12)  - 24\sigma(n/24) + a(n),
\end{aligned}\\
&\begin{aligned}
{\bf (v)~} N(1,1,1,2;n) = & 8\sigma_{(\chi_1,\chi_8)}(n) - 2\sigma_{(\chi_8,\chi_1)}(n) ,
\end{aligned}\\
&\begin{aligned}
{\bf (vi)~} N(1,1,3,6;n) = &\frac{16}{5} \sigma_{(\chi_1,\chi_8)}(n) - \frac{24}{5}\sigma_{(\chi_1,\chi_8)}(n/3) -\frac{4}{5}\sigma_{(\chi_8,\chi_1)}(n) \\
 &-\frac{6}{5}\sigma_{(\chi_8,\chi_1)}(n/3)+\frac{8}{5}b_1(n),
\end{aligned}\\
&\begin{aligned}
{\bf (vii)~}
 N(1,2,2,2;n) = & 4\sigma_{(\chi_1,\chi_8)}(n) -2\sigma_{(\chi_8,\chi_1)}(n),  
\end{aligned}\\
&\begin{aligned}
{\bf (viii)~}
 N(1,2,3,3;n) = &\frac{8}{5} \sigma_{(\chi_1,\chi_8)}(n) + \frac{48}{5}\sigma_{(\chi_1,\chi_8)}(n/3) +\frac{2}{5}\sigma_{(\chi_8,\chi_1)}(n) \\
 &-\frac{12}{5}\sigma_{(\chi_8,\chi_1)}(n/3)-\frac{8}{5}b_1(n)+\frac{8}{5}b_2(n), 
\end{aligned}\\
&\begin{aligned}
{\bf (ix)~} N(1,1,1,3;n) = & 6\sigma_{(\chi_1,\chi_{12})}(n) -\sigma_{(\chi_{12},\chi_1)}(n) -2\sigma_{(\chi_{-3},\chi_{-4})}(n) +3\sigma_{(\chi_{-4},\chi_{-3})}(n) ,
\end{aligned}\\
&\begin{aligned}
{\bf (x)~}
 N(1,1,2,6;n) = & 3\sigma_{(\chi_1,\chi_{12})}(n) -\sigma_{(\chi_{12},\chi_1)}(n/2) + \sigma_{(\chi_{-3},\chi_{-4})}(n) + 3\sigma_{(\chi_{-4},\chi_{-3})}(n/2), 
\end{aligned}\\
&\begin{aligned}
{\bf (xi)~}N(1,2,2,3;n) = &  3\sigma_{(\chi_1,\chi_{12})}(n) -\sigma_{(\chi_{12},\chi_1)}(n/2) -\sigma_{(\chi_{-3},\chi_{-4})}(n) -3\sigma_{(\chi_{-4},\chi_{-3})}(n/2) ,
\end{aligned}\\
&\begin{aligned}
{\bf (xii)~} N(1,1,1,6;n) = & 4\sigma_{(\chi_1,\chi_{24})}(n) -\frac{1}{3}\sigma_{(\chi_{24},\chi_1)}(n) -\frac{4}{3}\sigma_{(\chi_{-3},\chi_{-8})}(n) \\
&+\sigma_{(\chi_{-8},\chi_{-3})}(n) +8c_1(n) +\frac{8}{3}c_2(n),
\end{aligned}\\
&\begin{aligned}
{\bf (xiii)~}
 N(1,1,2,3;n) = &  4\sigma_{(\chi_1,\chi_{24})}(n) -\frac{1}{3}\sigma_{(\chi_{24},\chi_1)}(n) +\frac{4}{3}\sigma_{(\chi_{-3},\chi_{-8})}(n) -\sigma_{(\chi_{-8},\chi_{-3})}(n) ,
\end{aligned}\\
&\begin{aligned}
{\bf (xiv)~}
 N(1,2,2,6;n) = & 2\sigma_{(\chi_1,\chi_{24})}(n) -\frac{1}{3}\sigma_{(\chi_{24},\chi_1)}(n) -\frac{2}{3}\sigma_{(\chi_{-3},\chi_{-8})}(n) +\sigma_{(\chi_{-8},\chi_{-3})}(n) .
\end{aligned}
\end{align*}
\end{theorem}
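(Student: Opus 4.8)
The plan is to read off each formula directly from the corresponding expansion in Theorems 3.1--3.4, since by (1.9) the generating function of $N(a_1,a_2,a_3,a_4;n)$ is precisely $\varphi[a_1,a_2,a_3,a_4](z)$. Hence it suffices to extract the coefficient of $q^n$ from the right-hand side of each identity in Tables 3.1--3.4. The fourteen universal forms (i)--(xiv) are distributed among the four groups (1.10)--(1.13): forms (i)--(iv) lie in $M_2(\Gamma_0(24))$ and are handled by Theorem 3.1, forms (v)--(viii) lie in $M_2(\Gamma_0(24),\chi_8)$ and are handled by Theorem 3.2, forms (ix)--(xi) lie in $M_2(\Gamma_0(24),\chi_{12})$ and are handled by Theorem 3.3, and forms (xii)--(xiv) lie in $M_2(\Gamma_0(24),\chi_{24})$ and are handled by Theorem 3.4.

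The only computational ingredient is the list of Fourier coefficients of the basis elements. From (1.7) together with $L(q)=-\tfrac{1}{24}+\sum_{n\geq 1}\sigma(n)q^n$, the coefficient of $q^n$ in $L_d(q)=L(q)-dL(q^d)$ equals $\sigma(n)-d\,\sigma(n/d)$ for $n\geq 1$, with the convention $\sigma(n/d)=0$ when $d\nmid n$. From (1.6), the coefficient of $q^n$ in $E_{t_1,t_2}(z)$ is $\sigma_{(\chi_{t_1},\chi_{t_2})}(n)$, and more generally the coefficient of $q^n$ in $E_{t_1,t_2}(kz)$ is $\sigma_{(\chi_{t_1},\chi_{t_2})}(n/k)$. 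Finally, writing $a(n)$, $b_1(n)$, $b_2(n)$, $c_1(n)$, $c_2(n)$ for the coefficients of $q^n$ in the cusp forms $A(q)$, $B_1(q)$, $B_2(q)$, $C_1(q)$, $C_2(q)$ respectively, these enter the formulas unevaluated.

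With these expansions in hand, each formula in Theorem 4.1 is obtained by substituting the coefficients from the relevant row of the appropriate table and collecting terms. For instance, Theorem 3.1 gives $\varphi[1,1,1,1](z)=8L_4(q)$, so comparing coefficients of $q^n$ yields $N(1,1,1,1;n)=8\bigl(\sigma(n)-4\sigma(n/4)\bigr)=8\sigma(n)-32\sigma(n/4)$, which is (i); the remaining thirteen cases follow in exactly the same way. I note that the forms in group (1.12) produce purely Eisenstein formulas (ix)--(xi), since $\dim S_2(\Gamma_0(24),\chi_{12})=0$ by (2.2), whereas the formulas drawing on $A$, $B_1$, $B_2$, $C_1$, $C_2$ retain explicit cusp-form coefficients.

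Because Theorems 3.1--3.4 already establish the underlying modular-form identities (via the Sturm bound), the present argument poses no genuine obstacle; the work is purely the bookkeeping of the generalized divisor sums $\sigma_{(\chi,\psi)}$ across the fourteen cases. The one point meriting care is the consistent application of the conventions $\sigma(n/d)=0$ and $\sigma_{(\chi,\psi)}(n/k)=0$ when the dilation index fails to divide $n$, which is exactly what produces the dilated divisor-sum terms such as $\sigma_{(\chi_1,\chi_8)}(n/3)$ coming from the $E_{1,8}(3z)$ summands. Since the identities of Theorems 3.1--3.4 hold as holomorphic functions, they are valid for every Fourier coefficient; one may therefore verify as a consistency check that the $q^0$ coefficients of all right-hand sides equal $N(a_1,a_2,a_3,a_4;0)=1$ (for example $8\cdot\tfrac{4-1}{24}=1$ in case (i)), confirming the normalizations of $L_d$ and $E_{t_1,t_2}$.
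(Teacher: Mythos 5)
Your proposal is correct and matches the paper's own (one-line) proof, which simply states that Theorems 4.1 and 4.2 follow from Theorems 3.1--3.4 by comparing Fourier coefficients; you have merely made the routine bookkeeping explicit, including the correct expansions of $L_d(q)$ and $E_{t_1,t_2}(kz)$ and the divisibility conventions. No gaps.
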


\begin{theorem}  %% Thm 4.2
Let $n \in \nn$.  Then
\begin{align*}
&\begin{aligned}
{\bf (i)~}
N(1,1,6,6;n) = & 2 \sigma(n) -2 \sigma (n/2) -6 \sigma (n/3) -4 \sigma (n/4)+ 6 \sigma(n/6)  \\
 &+ 16 \sigma(n/8) +12\sigma(n/12) - 48 \sigma(n/24) +2 a(n),
\end{aligned}\\
&\begin{aligned}
{\bf (ii)~} N(2,2,3,3;n) = &4\sigma(n) - 8\sigma (n/2) -12 \sigma (n/3) + 16 \sigma (n/4) \\
 & +24\sigma(n/6) - 48\sigma(n/12),
\end{aligned}\\
&\begin{aligned}
{\bf (iii)~} N(1,2,6,6;n) = &\frac{4}{5} \sigma_{(\chi_1,\chi_8)}(n) + \frac{24}{5}\sigma_{(\chi_1,\chi_8)}(n/3) +\frac{2}{5}\sigma_{(\chi_8,\chi_1)}(n) \\
 &-\frac{12}{5}\sigma_{(\chi_8,\chi_1)}(n/3)+\frac{8}{5}b_1(n) - \frac{4}{5}b_2(n),
\end{aligned}\\
&\begin{aligned}
{\bf (iv)~} N(2,2,3,6;n) = &\frac{8}{5} \sigma_{(\chi_1,\chi_8)}(n) - \frac{12}{5}\sigma_{(\chi_1,\chi_8)}(n/3) - \frac{4}{5}\sigma_{(\chi_8,\chi_1)}(n) \\
 &-\frac{6}{5}\sigma_{(\chi_8,\chi_1)}(n/3) - \frac{4}{5}b_2(n),
\end{aligned}\\
&\begin{aligned}
{\bf (v)~}N(1,3,3,3;n) = &  2\sigma_{(\chi_1,\chi_{12})}(n)- \sigma_{(\chi_{12},\chi_1)}(n) +2\sigma_{(\chi_{-3},\chi_{-4})}(n) -\sigma_{(\chi_{-4},\chi_{-3})}(n) ,
\end{aligned}\\
&\begin{aligned}
{\bf (vi)~}N(1,3,6,6;n) = &  \sigma_{(\chi_1,\chi_{12})}(n)- \sigma_{(\chi_{12},\chi_1)}(n/2) +\sigma_{(\chi_{-3},\chi_{-4})}(n)  + \sigma_{(\chi_{-4},\chi_{-3})}(n/2) ,
\end{aligned}\\
&\begin{aligned}
{\bf (vii)~}N(2,3,3,6;n) = &  \sigma_{(\chi_1,\chi_{12})}(n)- \sigma_{(\chi_{12},\chi_1)}(n/2) - \sigma_{(\chi_{-3},\chi_{-4})}(n) -\sigma_{(\chi_{-4},\chi_{-3})}(n/2) ,
\end{aligned}\\
&\begin{aligned}
{\bf (viii)~}
 N(1,3,3,6;n) = & \frac{4}{3}\sigma_{(\chi_1,\chi_{24})}(n) -\frac{1}{3}\sigma_{(\chi_{24},\chi_1)}(n) +\frac{4}{3}\sigma_{(\chi_{-3},\chi_{-8})}(n) -\frac{1}{3}\sigma_{(\chi_{-8},\chi_{-3})}(n) ,
\end{aligned}\\
&\begin{aligned}
{\bf (ix)~}
 N(1,6,6,6;n) = & \frac{2}{3}\sigma_{(\chi_1,\chi_{24})}(n) -\frac{1}{3}\sigma_{(\chi_{24},\chi_1)}(n) +\frac{2}{3}\sigma_{(\chi_{-3},\chi_{-8})}(n)  \\
 &-\frac{1}{3}\sigma_{(\chi_{-8},\chi_{-3})}(n)+\frac{8}{3}c_1(n) +\frac{4}{3}c_2(n),
\end{aligned}\\
&\begin{aligned}
{\bf (x)~}
 N(2,2,2,3;n) = & 2\sigma_{(\chi_1,\chi_{24})}(n) -\frac{1}{3}\sigma_{(\chi_{24},\chi_1)}(n) +\frac{2}{3}\sigma_{(\chi_{-3},\chi_{-8})}(n)  \\
 &-\sigma_{(\chi_{-8},\chi_{-3})}(n) - \frac{4}{3}c_2(n),
\end{aligned}\\
&\begin{aligned}
{\bf (xi)~}
 N(2,3,3,3;n) = & \frac{4}{3}\sigma_{(\chi_1,\chi_{24})}(n) -\frac{1}{3}\sigma_{(\chi_{24},\chi_1)}(n) -\frac{4}{3}\sigma_{(\chi_{-3},\chi_{-8})}(n)  \\
 &+\frac{1}{3}\sigma_{(\chi_{-8},\chi_{-3})}(n) - \frac{8}{3}c_1(n),
\end{aligned}\\
&\begin{aligned}
{\bf (xii)~}
 N(2,3,6,6;n) =
  & \frac{1}{3}\big(2\sigma_{(\chi_1,\chi_{24})}(n) -\sigma_{(\chi_{24},\chi_1)}(n) -2\sigma_{(\chi_{-3},\chi_{-8})}(n)  +\sigma_{(\chi_{-8},\chi_{-3})}(n)\big) .
\end{aligned}
\end{align*}
\end{theorem}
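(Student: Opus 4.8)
The plan is to obtain Theorem 4.2 directly from the decompositions in Theorems 3.1--3.4 by comparing the coefficients of $q^n$ on both sides, exactly as in the proof of Theorem 3.1. By (1.9), the quantity $N(a_1,a_2,a_3,a_4;n)$ is by definition the coefficient of $q^n$ in the theta product $\varphi[a_1,a_2,a_3,a_4](z)$. Since Theorems 3.1--3.4 already express each of these theta products as an explicit linear combination of fixed basis elements, it suffices to expand the coefficient of $q^n$ of each basis element and collect terms.

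First I would record, for $n\in\nn$, the $q^n$-coefficient of every basis function. From (1.7) and the definition of $L(q)$, the coefficient of $q^n$ in $L_d(q)=L(q)-dL(q^d)$ is $\sigma(n)-d\,\sigma(n/d)$, the constant terms $-1/24$ being irrelevant for $n\geq 1$. From (1.6), the coefficient of $q^n$ in $E_{t_1,t_2}(z)$ is $\sigma_{(\chi_{t_1},\chi_{t_2})}(n)$; and since $z\mapsto dz$ sends $q\mapsto q^d$, the coefficient of $q^n$ in $E_{t_1,t_2}(dz)$ is $\sigma_{(\chi_{t_1},\chi_{t_2})}(n/d)$, with the convention that $\sigma_{(\chi,\psi)}$ vanishes off $\nn$. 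Finally I would let $a(n),b_1(n),b_2(n),c_1(n),c_2(n)$ denote the coefficients of $q^n$ in the cusp forms $A(q),B_1(q),B_2(q),C_1(q),C_2(q)$ of Theorem 2.1; these are simply carried along as the cuspidal contributions.

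Next, for each of the twelve forms I would select the appropriate decomposition according to its modular space: forms (i)--(ii) lie in $M_2(\Gamma_0(24))$ and use Table 3.1; forms (iii)--(iv) lie in $M_2(\Gamma_0(24),\chi_8)$ and use Table 3.2; forms (v)--(vii) lie in $M_2(\Gamma_0(24),\chi_{12})$ and use Table 3.3; and forms (viii)--(xii) lie in $M_2(\Gamma_0(24),\chi_{24})$ and use Table 3.4. For a fixed form I would substitute the tabulated scalars into the stated decomposition, replace each basis element by its $q^n$-coefficient from the previous step, and collect. For example, for $N(1,1,6,6;n)$ the row $(1,1,6,6)$ of Table 3.1 gives $\big(\sum_d b_d\big)\sigma(n)-\sum_d d\,b_d\,\sigma(n/d)+x\,a(n)$ with $d$ ranging over $2,3,4,6,8,12,24$, producing the combination in (i) together with the term $2a(n)$. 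For the forms in $M_2(\Gamma_0(24),\chi_{12})$ the decomposition of Theorem 3.3 is purely Eisenstein, so the formula is a clean combination of $\sigma_{(\chi_1,\chi_{12})},\sigma_{(\chi_{12},\chi_1)},\sigma_{(\chi_{-3},\chi_{-4})},\sigma_{(\chi_{-4},\chi_{-3})}$ evaluated at $n$ or at $n/2$ according to whether the entry multiplies $E_{t_1,t_2}(z)$ or $E_{t_1,t_2}(2z)$; e.g.\ the row $(1,3,3,3)$ of Table 3.3 yields $2\sigma_{(\chi_1,\chi_{12})}(n)-\sigma_{(\chi_{12},\chi_1)}(n)+2\sigma_{(\chi_{-3},\chi_{-4})}(n)-\sigma_{(\chi_{-4},\chi_{-3})}(n)$, which is (v).

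There is no genuine analytic obstacle: the entire content sits in Theorems 2.1 and 3.1--3.4, and the remaining work is formal. The only points requiring care are bookkeeping. First, one must correctly translate each dilation $z\mapsto 2z$ into the index shift $n\mapsto n/2$ inside the divisor sums, so that the $E_{t_1,t_2}(2z)$ rows contribute arguments $n/2$ rather than $n$. Second, one uses that all constant terms are immaterial because the assertion concerns $n\in\nn$ only. Third, one must transcribe the twelve table rows without error. Carrying out these substitutions row by row produces the twelve identities of Theorem 4.2.
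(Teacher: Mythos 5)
Your proposal is correct and follows exactly the route the paper takes: the paper simply states that Theorems 4.1 and 4.2 follow from Theorems 3.1--3.4, and your expansion of the $q^n$-coefficients of the $L_d(q)$, the dilated Eisenstein series $E_{t_1,t_2}(dz)$, and the cusp forms, combined with the tabulated scalars, is precisely that deduction made explicit. The sample verifications (e.g.\ row $(1,1,6,6)$ of Table 3.1 giving $\sum_d b_d=2$ and $-d\,b_d$ as the coefficient of $\sigma(n/d)$) check out.
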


\begin{remark}
{\rm
The formula in Theorem 4.1(i) is the classical result of Jacobi \cite{jacobi, williams-2011}. 
The formulas in Theorems 4.1(xiii)(xiv) and  4.2(viii)(xii)  agree with the formulas given in \cite[p. 1668]{AyseKSW}. 
Note that $A(n)$, $B(n)$, $C(n)$ and $D(n)$ in \cite{AyseKSW} are $\sigma_{(\chi_1,\chi_{24})}(n)$,  $\sigma_{(\chi_{-8},\chi_{-3})}(n)$, $\sigma_{(\chi_{-3},\chi_{-8})}(n)$ and  $\sigma_{(\chi_{24},\chi_1)}(n)$ in this paper, respectively. The formulas in Theorem 4.1(v)(vii) agree 
 with those given in  \cite{williams-2008}.  
The formulas in Theorems 4.1(i)--(iv)  and 4.2(i)(ii)  agree  with those given in  \cite{19quaternary}.
}
 \end{remark}

%% Section 5
\section{Eta quotients in  $E_2(\Gamma_0(24),\chi_i)$ for  $ i\in \{1, 8, 12, 24\}$}

Using MAPLE, we  found that there are exactly $819$, $212$, $800$ and $212$ eta quotients in $M_2(\Gamma_0(24),\chi_i)$ for $i\in\{1, 8, 12, 24\}$, respectively. 
Of these, $282$, $8$, $800$ and $8$  eta quotients  are in $E_2(\Gamma_0(24),\chi_i)$ for $i\in\{1, 8, 12, 24\}$, respectively. 
We note that as $\dim(S_2(\Gamma_0(24),\chi_{12}))=0$, we have $M_2(\Gamma_0(24),\chi_{12}) = E_2(\Gamma_0(24),\chi_{12})$.  

Of the eta quotients in $E_2(\Gamma_0(24))$, 250 arise directly from those given in \cite{williams-2012}. 
In Table 5.1 we list the remaining 32 eta quotients and their Fourier series expansions. 

In Table 5.2 we list all  $8$ eta quotients in $E_2(\Gamma_0(24), \chi_{24})$. 
All $8$ eta quotients in $E_2(\Gamma_0(24),\chi_8)$  arise directly from those in $E_2(\Gamma_0(8),\chi_8)$ given in \cite{alacaaygin-2}, 
so we do not list them here.  

All the eta quotients in $E_2(\Gamma_0(12),\chi_{12})$ are given in \cite{alacaaygin-1}. 
There are $395$  eta quotients in $E_2(\Gamma_0(24),\chi_{12})$, which do not  arise directly from those in  $E_2(\Gamma_0(12),\chi_{12})$. 
We list these eta quotients  and  their Fourier series expansions in Table 5.3.

\vspace{1mm}

Theorems 5.1--5.3 follow from Theorem 2.1 directly.

\begin{theorem} %Thm 5.1
Let  $f(z)=\ds \prod_{1\leq \delta\mid 24} \eta^{r_{\delta}}(\delta z)\in E_2(\Gamma_0(24))$ be any of the eta quotients with the exponents 
$r_{\delta}$ given on the left hand side of {\rm Table 5.1}. Then we have 
\begin{align*}
f(z)= b_2 L_2(q)+b_3 L_3(q) + b_4 L_4(q) + b_6 L_6(q) +b_8 L_8(q) + b_{12} L_{12}(q) +b_{24} L_{24}(q),
\end{align*}
where the coefficients $b_j$ $(j\in\{2,3,4,6,8,12,24\})$ are given at the right hand side of {\rm Table 5.1}.
\end{theorem}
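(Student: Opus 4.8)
The plan is to argue exactly as in Theorem 3.1, since the structure of the claim is identical: every eta quotient $f(z)$ in the list is asserted to lie in $E_2(\Gamma_0(24))$, and by Theorem 2.1 the set $\{L_d(q)\mid d=2,3,4,6,8,12,24\}\cup\{A(q)\}$ is a basis for the full space $M_2(\Gamma_0(24))$. Since $A(q)\in S_2(\Gamma_0(24))$ while $f(z)$ is an Eisenstein form, the $A(q)$-component of $f(z)$ must vanish, so $f(z)$ is automatically a linear combination of the seven functions $L_2(q),\dots,L_{24}(q)$ alone. Thus I would first write
\begin{align*}
f(z)= b_2 L_2(q)+b_3 L_3(q) + b_4 L_4(q) + b_6 L_6(q) +b_8 L_8(q) + b_{12} L_{12}(q) +b_{24} L_{24}(q)
\end{align*}
for unknown scalars $b_j\in\cc$.

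Next I would determine the scalars by comparing Fourier coefficients. The Sturm bound for $M_2(\Gamma_0(24))$ is $8$ (as already invoked in the proof of Theorem 3.1 via \cite[Theorem 3.13]{Kilford}), so equating the coefficients of $q^n$ for $0\leq n\leq 8$ on both sides yields a finite linear system whose solution gives the $b_j$. To set this up I would compute the $q$-expansion of each eta quotient from the product definition of $\eta(z)$ together with the exponents $r_\delta$ listed on the left side of Table 5.1, and compare against the known expansions of the $L_d(q)$ coming from $L(q)=-\tfrac1{24}+\sum_{n\geq1}\sigma(n)q^n$ and the definition $L_d(q)=L(q)-dL(q^d)$ in (1.7). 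Since the stated basis has seven elements and we are matching nine coefficients ($n=0,\dots,8$), the system is overdetermined; its consistency simultaneously reconfirms that each $f(z)$ genuinely lies in the Eisenstein subspace and pins down the $b_j$ uniquely.

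The one genuinely new input needed here, beyond repeating the Theorem 3.1 argument, is verifying that each listed eta quotient is in fact holomorphic and of the correct weight, level, and character, and that it is Eisenstein rather than having a cuspidal component. Weight and level follow from the standard eta-quotient criteria already cited in the proof of Theorem 2.1 (\cite[Theorem 1.64]{onoweb} and \cite[Corollary 2.3]{Kohler}); holomorphy at every cusp must be checked via the order-of-vanishing formula in those same references. The cleanest route, however, is simply to observe that once membership in $M_2(\Gamma_0(24))$ is established, solving the coefficient-matching system with the $A(q)$-coefficient forced to zero and finding a consistent solution \emph{proves} the Eisenstein property directly, because $\{L_d\}\cup\{A\}$ is a basis; so the final tabulated formulas are self-certifying.

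The main obstacle is bookkeeping rather than conceptual: each row of Table 5.1 corresponds to a different choice of exponents $r_\delta$, so in practice one must carry out the coefficient computation for all 32 eta quotients listed. This is mechanical and, as the text indicates, was handled with MAPLE; the essential mathematical content is entirely supplied by Theorem 2.1 and the Sturm-bound argument already rehearsed for Theorem 3.1. I would therefore present the proof in one line — that it follows from Theorem 2.1 and the vanishing of the cuspidal component exactly as in the proof of Theorem 3.1 — and defer the per-row coefficients to Table 5.1.
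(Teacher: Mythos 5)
Your proposal is correct and matches the paper's own (one-line) argument: the paper simply states that Theorems 5.1--5.3 follow directly from Theorem 2.1, with the coefficients pinned down by the same Sturm-bound coefficient-matching already carried out in the proof of Theorem 3.1, and the $A(q)$-component vanishing because $f$ lies in the Eisenstein subspace. Your additional remarks about verifying membership via the standard eta-quotient criteria and the self-certifying nature of the consistent overdetermined system are sound elaborations of the same route, not a different one.
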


{\scriptsize
\begin{center}
\begin{longtable}{r r r r r r r r| r r r r r r r }
\caption{ $f(z)= b_2 L_2(q)+b_3 L_3(q) + b_4 L_4(q) + b_6 L_6(q) +b_8 L_8(q) + b_{12} L_{12}(q) +b_{24} L_{24}(q)$. } \\
\hline\hline 
$r_1$ & $r_2$ & $r_3$ & $r_4$ & $r_6$ & $r_{8}$ & $r_{12}$ & $r_{24}$ & $b_2$ & $b_3$ & $b_4$ & $b_6$ & $b_8$ & $b_{12}$ & $b_{24}$  \\ 
\hline
\endfirsthead
%{\textit{Continued from previous page}} \\
\hline
\hline 
 $r_1$ & $r_2$ & $r_3$ & $r_4$ & $r_6$ & $r_{8}$ & $r_{12}$ & $r_{24}$ & $b_2$ & $b_3$ & $b_4$ & $b_6$ & $b_8$ & $b_{12}$ & $b_{24}$  \\ 
\hline
\endhead
\hline % {\textit{Continued on next page}} \\
\endfoot
\hline
\endlastfoot
$-1$ & $ 4$ & $ -1$ & $ -5$ & $ -2$ & $ 2$ & $ 9$ & $ -2$ & $  -1/2$ & $  -1/3$ & $  5/4$ & $  1/3$ & $  -1/2$ & $  -5/12$ & $  1/6$ \\
$-1$ & $ 2$ & $ -1$ & $ 1$ & $ 0$ & $ -2$ & $ 3$ & $ 2$ & $  0$ & $  -1/3$ & $  -1/4$ & $  1/6$ & $  1/2$ & $  1/12$ & $  -1/6$ \\
$-1$ & $ 0$ & $ -1$ & $ 3$ & $ 2$ & $ 2$ & $ 1$ & $ -2$ & $  1/2$ & $  0$ & $  1/4$ & $  0$ & $  -1/2$ & $  -3/4$ & $  3/2$ \\
$-1$ & $ -2$ & $ -1$ & $ 9$ & $ 4$ & $ -2$ & $ -5$ & $ 2$ & $  1$ & $  0$ & $  -5/4$ & $  -3/2$ & $  1/2$ & $  15/4$ & $  -3/2$ \\
$-2$ & $ 5$ & $ 2$ & $ -4$ & $ -5$ & $ 1$ & $ 6$ & $ 1$ & $  0$ & $  -1/3$ & $  -1/2$ & $  5/6$ & $  1/4$ & $  -1/6$ & $  -1/12$ \\
$-2$ & $ 4$ & $ 2$ & $ -1$ & $ -6$ & $ -1$ & $ 9$ & $ -1$ & $  -1/2$ & $  -2/3$ & $  1/2$ & $  11/6$ & $  -1/2$ & $  -5/6$ & $  1/6$ \\
$-2$ & $ 1$ & $ 2$ & $ 4$ & $ -1$ & $ 1$ & $ -2$ & $ 1$ & $  1/2$ & $  0$ & $  -1/2$ & $  0$ & $  1/4$ & $  3/2$ & $  -3/4$ \\
$-2$ & $ 0$ & $ 2$ & $ 7$ & $ -2$ & $ -1$ & $ 1$ & $ -1$ & $  1/2$ & $  0$ & $  1/2$ & $  3/2$ & $  -1/2$ & $  -3/2$ & $  3/2$ \\
$-1$ & $ 1$ & $ -1$ & $ -2$ & $ 7$ & $ 2$ & $ 0$ & $ -2$ & $  1$ & $  1/3$ & $  -1$ & $  -1/3$ & $  0$ & $  -1/3$ & $  4/3$ \\
$-1$ & $ -1$ & $ -1$ & $ 4$ & $ 9$ & $ -2$ & $ -6$ & $ 2$ & $  1$ & $  1/3$ & $  -1$ & $  -5/3$ & $  0$ & $  11/3$ & $  -4/3$ \\
$-2$ & $ 2$ & $ 2$ & $ -1$ & $ 4$ & $ 1$ & $ -3$ & $ 1$ & $  1/2$ & $  1/3$ & $  0$ & $  -5/6$ & $  0$ & $  5/3$ & $  -2/3$ \\
$-2$ & $ 1$ & $ 2$ & $ 2$ & $ 3$ & $ -1$ & $ 0$ & $ -1$ & $  1$ & $  2/3$ & $  0$ & $  -1/3$ & $  0$ & $  -2/3$ & $  4/3$ \\
$2$ & $ -1$ & $ -2$ & $ -2$ & $ 1$ & $ 1$ & $ 4$ & $ 1$ & $  0$ & $  -1/3$ & $  1/2$ & $  1/6$ & $  -1/4$ & $  -1/6$ & $  1/12$ \\
$2$ & $ -2$ & $ -2$ & $ 1$ & $ 0$ & $ -1$ & $ 7$ & $ -1$ & $  -1/2$ & $  2/3$ & $  1/2$ & $  -1/6$ & $  -1/2$ & $  -1/6$ & $  1/6$ \\
$2$ & $ -5$ & $ -2$ & $ 6$ & $ 5$ & $ 1$ & $ -4$ & $ 1$ & $  5/2$ & $  0$ & $  -1/2$ & $  0$ & $  -1/4$ & $  -3/2$ & $  3/4$ \\
$2$ & $ -6$ & $ -2$ & $ 9$ & $ 4$ & $ -1$ & $ -1$ & $ -1$ & $  -11/2$ & $  0$ & $  5/2$ & $  3/2$ & $  -1/2$ & $  -3/2$ & $  3/2$ \\
$1$ & $ 1$ & $ 1$ & $ -4$ & $ -5$ & $ 2$ & $ 10$ & $ -2$ & $  -1/2$ & $  1/3$ & $  5/4$ & $  -2/3$ & $  -1/2$ & $  -1/12$ & $  1/6$ \\
$1$ & $ -1$ & $ 1$ & $ 2$ & $ -3$ & $ -2$ & $ 4$ & $ 2$ & $  0$ & $  -1/3$ & $  1/4$ & $  5/6$ & $  -1/2$ & $  -5/12$ & $  1/6$ \\
$1$ & $ -3$ & $ 1$ & $ 4$ & $ -1$ & $ 2$ & $ 2$ & $ -2$ & $  -5/2$ & $  0$ & $  5/4$ & $  0$ & $  -1/2$ & $  -3/4$ & $  3/2$ \\
$1$ & $ -5$ & $ 1$ & $ 10$ & $ 1$ & $ -2$ & $ -4$ & $ 2$ & $  2$ & $  0$ & $  1/4$ & $  3/2$ & $  -1/2$ & $  -15/4$ & $  3/2$ \\
$2$ & $ -4$ & $ -2$ & $ 1$ & $ 10$ & $ 1$ & $ -5$ & $ 1$ & $  5/2$ & $  1/3$ & $  -1$ & $  -1/6$ & $  0$ & $  -4/3$ & $  2/3$ \\
$2$ & $ -5$ & $ -2$ & $ 4$ & $ 9$ & $ -1$ & $ -2$ & $ -1$ & $  -5$ & $  -2/3$ & $  2$ & $  5/3$ & $  0$ & $  -4/3$ & $  4/3$ \\
$1$ & $ -2$ & $ 1$ & $ -1$ & $ 4$ & $ 2$ & $ 1$ & $ -2$ & $  -2$ & $  -1/3$ & $  0$ & $  2/3$ & $  0$ & $  -2/3$ & $  4/3$ \\
$1$ & $ -4$ & $ 1$ & $ 5$ & $ 6$ & $ -2$ & $ -5$ & $ 2$ & $  2$ & $  1/3$ & $  0$ & $  2/3$ & $  0$ & $  -10/3$ & $  4/3$ \\
$-1$ & $ 9$ & $ -1$ & $ -6$ & $ -1$ & $ 2$ & $ 4$ & $ -2$ & $  5$ & $  3$ & $  -11$ & $  -3$ & $  4$ & $  3$ & $  0$ \\
$-1$ & $ 7$ & $ -1$ & $ 0$ & $ 1$ & $ -2$ & $ -2$ & $ 2$ & $  1$ & $  3$ & $  1$ & $  -3$ & $  -4$ & $  3$ & $  0$ \\
$-2$ & $ 10$ & $ 2$ & $ -5$ & $ -4$ & $ 1$ & $ 1$ & $ 1$ & $  1/2$ & $  3$ & $  4$ & $  -15/2$ & $  -2$ & $  3$ & $  0$ \\
$-2$ & $ 9$ & $ 2$ & $ -2$ & $ -5$ & $ -1$ & $ 4$ & $ -1$ & $  5$ & $  6$ & $  -4$ & $  -15$ & $  4$ & $  6$ & $  0$ \\
$2$ & $ 4$ & $ -2$ & $ -3$ & $ 2$ & $ 1$ & $ -1$ & $ 1$ & $  5/2$ & $  3$ & $  -5$ & $  -3/2$ & $  2$ & $  0$ & $  0$ \\
$2$ & $ 3$ & $ -2$ & $ 0$ & $ 1$ & $ -1$ & $ 2$ & $ -1$ & $  -1$ & $  -6$ & $  -2$ & $  3$ & $  4$ & $  0$ & $  0$ \\
$1$ & $ 6$ & $ 1$ & $ -5$ & $ -4$ & $ 2$ & $ 5$ & $ -2$ & $  2$ & $  -3$ & $  -10$ & $  6$ & $  4$ & $  0$ & $  0$ \\
$1$ & $ 4$ & $ 1$ & $ 1$ & $ -2$ & $ -2$ & $ -1$ & $ 2$ & $  2$ & $  3$ & $  -2$ & $  -6$ & $  4$ & $  0$ & $  0$ 
\end{longtable}
\end{center}
}

\begin{theorem} %Thm 5.2
Let  $f(z)=\ds \prod_{1\leq \delta\mid 24} \eta^{r_{\delta}}(\delta z)\in E_2(\Gamma_0(24),\chi_{24})$ be any of the eta quotients with the exponents 
$r_{\delta}$ given on the left hand side of {\em Table 5.2}, where $\chi_{24}(n)$ is given by {\em (1.1)}. Then we have 
\begin{align*}
f(z)= b_1 E_{24, 1} (z)+b_2 E_{1,24} (z)+b_3 E_{ -8,-3} (z)+ b_4 E_{ -3,-8} (z),
\end{align*}
where the coefficients $b_1$, $b_{2}$, $b_3$, $b_4$ are given at the right hand side of {\em Table 5.2}.
\end{theorem}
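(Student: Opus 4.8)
The plan is to reduce the statement to a finite amount of linear algebra, exactly as in the proofs of Theorems 3.1--3.4, using the basis of $M_2(\Gamma_0(24),\chi_{24})$ supplied by Theorem 2.1. Since $\{E_{24,1}(z),E_{1,24}(z),E_{-8,-3}(z),E_{-3,-8}(z),C_1(q),C_2(q)\}$ is a basis of $M_2(\Gamma_0(24),\chi_{24})$, once I know that a given eta quotient $f(z)$ from Table 5.2 lies in this space I may write
$$f(z)=b_1E_{24,1}(z)+b_2E_{1,24}(z)+b_3E_{-8,-3}(z)+b_4E_{-3,-8}(z)+x_1C_1(q)+x_2C_2(q)$$
for unique scalars $b_1,b_2,b_3,b_4,x_1,x_2\in\cc$. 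The assertion to be proved --- that $f\in E_2(\Gamma_0(24),\chi_{24})$ with the stated coefficients --- then amounts to showing that $x_1=x_2=0$ and that the $b_i$ match Table 5.2, both of which I would obtain by comparing Fourier coefficients.

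First I would verify membership $f\in M_2(\Gamma_0(24),\chi_{24})$ for each of the $8$ eta quotients. Applying the eta-quotient criteria (\cite[Theorem 1.64]{onoweb}, \cite[Corollary 2.3]{Kohler}; cf.\ \cite{Ligozat,Kilford}), this reduces, for the listed exponents $r_\delta$, to checking the weight condition $\tfrac12\sum_{\delta\mid24}r_\delta=2$, the level and character conditions singling out $\Gamma_0(24)$ with multiplier $\chi_{24}$, and holomorphy at every cusp, i.e.\ nonnegativity of the Ligozat order
$$\frac{N}{24}\sum_{\delta\mid N}\frac{\gcd(d,\delta)^2\,r_\delta}{\gcd(d,N/d)\,d\,\delta}\ge 0$$
for each divisor $d$ of $N=24$ indexing the cusps. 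These are finite, routine verifications, one per cusp.

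With membership established, I would compute the $q$-expansion of each $f$ directly from the product $\prod_{\delta\mid24}\eta^{r_\delta}(\delta z)$ and equate it with the $q$-expansion of the right-hand side, whose terms follow from (1.6) and the eta-product definitions of $C_1(q)$ and $C_2(q)$, up to the Sturm bound of $M_2(\Gamma_0(24))$, which is $8$ by \cite[Theorem 3.13]{Kilford}. Solving the resulting linear system yields the six scalars; in every case I expect $x_1=x_2=0$, so that $f$ has no cusp-form component and hence $f\in E_2(\Gamma_0(24),\chi_{24})$, while the surviving $b_1,\dots,b_4$ reproduce Table 5.2. The Sturm bound guarantees that agreement of the two $q$-expansions through $q^8$ forces equality of all Fourier coefficients, so no further checking is needed. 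The only genuinely delicate step is the cusp-holomorphy verification of the eta quotients; the remainder is bookkeeping of Fourier coefficients and a small linear solve.
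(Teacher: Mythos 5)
Your proposal is correct and follows essentially the same route as the paper: the paper's proof consists of the single remark that Theorems 5.1--5.3 ``follow from Theorem 2.1 directly,'' i.e.\ one writes each eta quotient in the basis of $M_2(\Gamma_0(24),\chi_{24})$ from Theorem 2.1 and determines the coefficients by comparing Fourier expansions up to the Sturm bound, exactly as you describe (with the membership verification handled in the paper by the MAPLE computation announced at the start of Section 5). Your write-up simply makes explicit the cusp-holomorphy check and the linear solve that the paper leaves implicit.
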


{\small 
\begin{center}
\begin{longtable}{r r r r r r r r| r r r r}
\caption{\scriptsize 
$f(z)=b_1 E_{24, 1} (z)+b_2 E_{1,24} (z)+b_3 E_{ -8,-3} (z)+ b_4 E_{ -3,-8} (z)$. } \\
\hline\hline 
$r_1$ & $r_2$ & $r_3$ & $r_4$ & $r_6$ & $r_{8}$ & $r_{12}$ & $r_{24}$ & $b_1$ & $b_2$ & $b_{3}$ & $b_4$   \\ 
\hline
\endfirsthead
%{\textit{Continued from previous page}} \\
\hline
\hline 
 $r_1$ & $r_2$ & $r_3$ & $r_4$ & $r_6$ & $r_{8}$ & $r_{12}$ & $r_{24}$ & $b_1$ & $b_2$ & $b_{3}$ & $b_4$   \\ 
\hline
\endhead
\hline % {\textit{Continued on next page}} \\
\endfoot
\hline
\endlastfoot
$0$ & $ -2$ & $ -2$ & $ 5$ & $ 1$ & $ -2$ & $ 8$ & $ -4$ & $ -1/3$ & $ 2/3$ & $ 1/3$ & $ -2/3 $ \\
 $-1$ & $ -1$ & $ 1$ & $ 6$ & $ -2$ & $ -3$ & $ 5$ & $ -1$ & $ 0$ & $ 2/3$ & $ 1/3$ & $ 0 $ \\
 $-2$ & $ 1$ & $ 0$ & $ 8$ & $ -2$ & $ -4$ & $ 5$ & $ -2$ & $ -1/3$ & $ 2$ & $ 1$ & $ -2/3 $ \\
 $-2$ & $ 5$ & $ -4$ & $ -2$ & $ 8$ & $ 0$ & $ 1$ & $ -2$ & $ -1/3$ & $ 4/3$ & $ -1/3$ & $ 4/3 $ \\
 $-3$ & $ 6$ & $ -1$ & $ -1$ & $ 5$ & $ -1$ & $ -2$ & $ 1$ & $ 0$ & $ 4/3$ & $ -1/3$ & $ 0 $ \\
 $-4$ & $ 8$ & $ -2$ & $ 1$ & $ 5$ & $ -2$ & $ -2$ & $ 0$ & $ -1/3$ & $ 4$ & $ -1$ & $ 4/3 $ \\
 $1$ & $ -2$ & $ -1$ & $ 5$ & $ -1$ & $ -1$ & $ 6$ & $ -3$ & $ -1/3$ & $ 0$ & $ 0$ & $ -2/3 $ \\
 $-1$ & $ 5$ & $ -3$ & $ -2$ & $ 6$ & $ 1$ & $ -1$ & $ -1$ & $ -1/3$ & $ 0$ & $ 0$ & $ 4/3$
\end{longtable}
\end{center}
}

\begin{theorem} %Thm 5.3
Let  $f(z)=\ds \prod_{1\leq \delta\mid 24} \eta^{r_{\delta}}(\delta z)\in E_2(\Gamma_0(24),\chi_{12})$ be any of the eta quotients with the exponents 
$r_{\delta}$ given on the left hand side of {\em Table 5.3}, where $\chi_{12}(n)$ is given by {\em (1.1)}. Then we have 
\begin{align*}
f(z)=& b_1 E_{12,1}(z) +b_2 E_{12,1} (2z)+ b_3E_{1,12}(z) +b_4 E_{1,12} (2z) +b_5 E_{-4,-3} (z) \\
&+b_6 E_{-4,-3} (2z) +b_7 E_{-3,-4} (z) +b_8 E_{-3,-4} (2z),
\end{align*}
where the coefficients $b_i$ $(1\leq i\leq 8)$ are given at the right hand side of {\em Table 5.3}.
\end{theorem}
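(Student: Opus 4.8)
The plan is to reduce the statement entirely to the basis provided by Theorem 2.1(iii) together with the Sturm bound, exactly as in the proofs of Theorems 3.1--3.4 and 5.1--5.2. First I would record the structural fact that keeps the argument clean: since $\dim S_2(\Gamma_0(24),\chi_{12})=0$ by (2.2), the decomposition (1.3) gives $M_2(\Gamma_0(24),\chi_{12})=E_2(\Gamma_0(24),\chi_{12})$, and by Theorem 2.1(iii) the eight Eisenstein series
\[
E_{12,1}(z),\, E_{12,1}(2z),\, E_{1,12}(z),\, E_{1,12}(2z),\, E_{-4,-3}(z),\, E_{-4,-3}(2z),\, E_{-3,-4}(z),\, E_{-3,-4}(2z)
\]
form a basis of this eight-dimensional space. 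Hence any $f$ belonging to it is a \emph{unique} linear combination of these eight series, and it only remains to exhibit that combination for each eta quotient listed in Table 5.3.

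Next I would justify the membership hypothesis $f\in E_2(\Gamma_0(24),\chi_{12})$ for each row of the table. Using the eta-quotient criteria of \cite[Theorem 1.64]{onoweb} and \cite[Corollary 2.3]{Kohler} (cf. \cite{Ligozat}), one checks for the exponent vector $(r_\delta)_{\delta\mid 24}$ in a given row that: (a) the weight is $\tfrac12\sum_{\delta\mid 24} r_\delta = 2$; (b) the congruences $\sum_{\delta\mid 24}\delta r_\delta\equiv 0$ and $\sum_{\delta\mid 24}(24/\delta)r_\delta\equiv 0 \pmod{24}$ hold, with the associated quadratic character equal to $\chi_{12}$; and (c) the order of $f$ at every cusp of $\Gamma_0(24)$, computed by Ligozat's cusp-order formula, is nonnegative, so that $f$ is holomorphic. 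Condition (c) places $f$ in $M_2(\Gamma_0(24),\chi_{12})$, and since this space coincides with its Eisenstein subspace we automatically obtain $f\in E_2(\Gamma_0(24),\chi_{12})$. This verification is precisely the MAPLE search reported before the theorem, which produced all $395$ such eta quotients.

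Finally I would determine the coefficients $b_1,\dots,b_8$. Because $f$ lies in the span of the eight basis series, I write the asserted identity with undetermined scalars $b_1,\dots,b_8\in\cc$ and compare Fourier coefficients. The Sturm bound for weight-$2$ forms on $\Gamma_0(24)$ is $\big\lfloor \tfrac{2\,[\mathrm{SL}_2(\zz):\Gamma_0(24)]}{12}\big\rfloor = \big\lfloor \tfrac{2\cdot 48}{12}\big\rfloor = 8$ by \cite[Theorem 3.13]{Kilford}, so equating the coefficients of $q^n$ for $0\le n\le 8$ yields a linear system whose solution is forced to be consistent and unique. The required expansions are immediate: the Eisenstein series have the explicit $q$-series (1.6) driven by the generalized divisor sums $\sigma_{(\chi_{t_1},\chi_{t_2})}(n)$, while the eta quotient's expansion comes from the product defining $\eta$. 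Solving this system row by row produces the entries of Table 5.3.

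The work here is computational rather than conceptual, and the main obstacle is simply its scale: each of the $395$ eta quotients requires a separate cusp-order check to confirm holomorphy and a separate linear solve for the eight coefficients, which is why the computation is carried out in MAPLE. The only genuinely analytic input beyond Theorem 2.1 is the cusp-order computation of step (c); once holomorphy is established, the basis property and the Sturm bound guarantee that matching the first nine Fourier coefficients determines the expansion in Table 5.3 exactly, with no possibility of an inconsistent or non-unique answer.
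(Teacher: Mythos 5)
Your proposal is correct and follows essentially the same route as the paper: the paper's proof of Theorem 5.3 is simply the remark that it ``follows from Theorem 2.1 directly,'' with the mechanics (membership of the eta quotients checked via the criteria of Ono/K\"ohler/Ligozat, expansion in the basis of Theorem 2.1(iii), and determination of the coefficients by equating Fourier coefficients up to the Sturm bound $8$) being exactly the procedure the paper spells out in the proof of Theorem 3.1 and carries out by computer. Nothing in your argument deviates from or adds a gap to the paper's approach.
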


{\scriptsize
\begin{landscape}
\begin{center}
\def\arraystretch{0.6}
% [inline block 0: 1 envs, 91078 chars -> data_tex | \begin{longtable}{r r r r r r r r | r r r r r r r r} \caption{\scriptsize{$f(z)=b_1 E_{12,1}(z) +b_2 E_{12,1} (2z)+ b_3E...]

\end{center}
\end{landscape} }

\section*{Acknowledgments} 
The authors are grateful to Professor Emeritus Kenneth S. Williams for helpful discussions throughout the course of this research. 
The research of the first two authors was supported 
by Discovery Grants from the Natural Sciences and Engineering Research Council of Canada (RGPIN-418029-2013 and RGPIN-2015-05208). 
Zafer Selcuk Aygin's studies are supported by Turkish Ministry of Education.

\vspace{2mm}
\noindent
Centre for Research in Algebra and Number Theory \\
School of Mathematics and Statistics \\
Carleton University\\
Ottawa, Ontario, K1S 5B6, Canada \\

\noindent
AyseAlaca@cunet.carleton.ca\\
SabanAlaca@cunet.carleton.ca\\
ZaferAygin@cmail.carleton.ca

\end{document}